\documentclass[a4paper,reqno]{amsart}

\usepackage[utf8]{inputenc}
\usepackage[english]{babel}

\usepackage{amsfonts,amsthm,amssymb,mathtools}
\numberwithin{equation}{section}

\usepackage{hyperref}
\usepackage[capitalise,noabbrev]{cleveref} 
\crefname{equation}{}{}
\Crefname{equation}{}{}

\newtheorem{theorem}{Theorem}[section]
\newtheorem{lemma}[theorem]{Lemma}
\newtheorem{proposition}[theorem]{Proposition}

\theoremstyle{definition}

\theoremstyle{remark}
\newtheorem*{remark}{Remark}

\newcommand{\N}{\mathbb{N}}
\newcommand{\Z}{\mathbb{Z}}

\newcommand{\R}{\mathbb{R}}
\newcommand{\C}{\mathbb{C}}

\newcommand{\F}{\mathcal F}

\renewcommand{\S}{\mathcal{S}}

\DeclarePairedDelimiter\norm{\lVert}{\rVert}

\DeclareMathOperator{\supp}{supp}
\newcommand{\sloc}{\mathrm{sloc}}

\begin{document}

\title[Spectral multipliers for Grushin operators]
{A $p$-specific spectral multiplier theorem with sharp regularity bound for Grushin operators}
\author{Lars Niedorf}
\address{Mathematisches Seminar, Christian-Albrechts-Universität zu Kiel, Heinrich-Hecht-Platz 6, 24118 Kiel, Germany}
\subjclass[2020]{Primary 43A85, 42B15. Secondary 47F05}
\keywords{Grushin operator, spectral multiplier, Mikhlin--Hörmander multiplier, Bochner--Riesz mean, restriction type estimate}
\email{niedorf@math.uni-kiel.de}
\date{June 22, 2022}
\thanks{This version of the article has been accepted for publication, after peer review but is not the Version of Record and does not reflect post-acceptance improvements, or any corrections. The Version of Record is available online at: \url{https://doi.org/10.1007/s00209-022-03029-0}.\\
The author gratefully acknowledges the support by the Deutsche Forschungsgemeinschaft (DFG) through grant MU 761/12-1.}

\maketitle

\begin{abstract}
In a recent work, P. Chen and E. M. Ouhabaz proved a $p$-specific $L^p$-spectral multiplier theorem for the Grushin operator acting on $\R^{d_1}\times\R^{d_2}$ which is given by
\[
L =-\sum_{j=1}^{d_1} \partial_{x_j}^2 - \bigg( \sum_{j=1}^{d_1} |x_j|^2\bigg) \sum_{k=1}^{d_2}\partial_{y_k}^2.
\]
Their approach yields an $L^p$-spectral multiplier theorem within the range $1< p\le \min\{ \frac{2d_1}{d_1+2},\frac{2(d_2+1)}{d_2+3} \}$ under a regularity condition on the multiplier which sharp only when $d_1\ge d_2$. In this paper, we improve on this result by proving $L^p$-boundedness under the expected sharp regularity condition $s>(d_1+d_2)(1/p-1/2)$. Our approach avoids the usage of weighted restriction type estimates which played a key role in the work of P. Chen and E. M. Ouhabaz, and is rather based on a careful analysis of the underlying sub-Riemannian geometry and restriction type estimates where the multiplier is truncated along the spectrum of the Laplacian on $\R^{d_2}$.
\end{abstract}

\section{Introduction}

Let $L$ be a positive self-adjoint linear differential operator on $L^2(M)$, where $M$ is a smooth $d$-dimensional manifold endowed with a smooth positive measure $\mu$. If $E$ denotes the spectral measure of $L$, we can define for every Borel measurable function $F:\R\to \C$ the (possibly unbounded) operator
\[
F(L) := \int_0^\infty F(\lambda) \,dE(\lambda).
\]
By the spectral theorem, $F(L)$ is a bounded operator on $L^2(M)$ if and only if the spectral multiplier $F$ is $E$-essentially bounded. The \textit{$L^p$-spectral multiplier problem} asks for identifying multipliers $F$ for which $F(L)$ extends from $L^2(M)\cap L^p(M)$ to a bounded operator $F(L):L^p(M)\to L^p(M)$.

For instance, in the case of the Laplacian $L=-\Delta$ on $\R^d$, the celebrated Mikhlin--Hörmander multiplier theorem \cite{Hoe60} provides the following sufficient condition for the question of $L^p$-boundedness: The operator $F(-\Delta)$ is bounded on $L^p(\R^d)$ for any $1<p<\infty$ whenever $F:\R\to\C$ satisfies the regularity condition
\[
\norm{F}_{\sloc,s} := \sup_{t>0}{ \norm{ \eta F(t\,\cdot\,)}_{L_s^2(\R)} } < \infty
\quad\text{for some } s>d/2.
\]
Here $\eta:\R\to\C$ shall denote some generic nonzero bump function supported in $(0,\infty)$, while $L_s^2(\R)\subseteq L^2(\R)$ is the Sobolev space of (fractional) order $s\in\R$. In the case $p=1$, the operator $F(-\Delta)$ is of weak type $(1,1)$, i.e., bounded as an operator between $L^1(\R^d)$ and the Lorentz space $L^{1,\infty}(\R^d)$. The threshold $d/2$ of the order $s$ is optimal and cannot be decreased.

A lot of attention has been paid to the question whether an analogous result of the Mikhlin--Hörmander multiplier theorem holds true for more general classes of (sub)-elliptic differential operators, most notably \textit{sub-Laplacians}. For left-invariant sub-Laplacians on Carnot groups, M.~Christ \cite{Ch91}, and G.~Mauceri and S.~Meda \cite{MaMe90} showed that $F(L)$ extends to a bounded operator on all $L^p$-spaces for $1<p<\infty$ and is of weak type $(1,1)$ whenever
\[
\norm{F}_{\sloc,s} < \infty
\quad\text{for some } s > Q/2,
\]
where $Q$ is the so-called \textit{homogeneous dimension} of the underlying Carnot group. It came therefore as a surprise when D.~Müller and E.~M.~Stein \cite{MueSt94}, and independently W.~Hebisch \cite{He93}, discovered in the early nineties that in the case of Heisenberg (-type) groups the threshold $s>Q/2$ can be even pushed down to $s>d/2$, with $d$ being the \textit{topological dimension} of the underlying group. The question whether this holds true for any sub-Laplacian $L$ is still open, although there has been extensive research on this problem and many partial results are available, including, e.g., sub-Laplacians on all 2-step nilpotent Lie groups of dimension $\le 7$ \cite{MaMue14b}, certain classes of 2-step nilpotent Lie groups of higher dimension \cite{Ma15}, Grushin operators \cite{MaMue14a}, as well as various classes of compact sub-Riemannian manifolds \cite{CoSi01,CoKlSi11,CaCoMaSi17,AhCoMaMue20}.
So far a counterexample requiring the threshold to be larger than $d/2$ is not known.

A refinement of asking for boundedness on all $L^p$-spaces for $1<p<\infty$ simultaneously is the question which order of differentiability $s$ is needed if $p$ is given (\textit{$p$-specific $L^p$-spectral multiplier estimates}). Again in the case of the Laplacian $L=-\Delta$, it is by now well-known (see \cite[Theorem 1.4]{LeRoSe14} for instance) that if $1< p \le 2(d+1)/(d+3)$ and if $F:[0,\infty)\to\C$ is a bounded Borel function satisfying
\[
\norm{F}_{\sloc,s} < \infty
\quad\text{for some } s > \max\{d\left|1/p-1/2\right|,1/2\},
\]
then the operator $F(-\Delta)$ is bounded on $L^p(\R^d)$. The condition on the range of $p$ derives from the celebrated Stein--Tomas Fourier restriction theorem \cite{To79} which is used for the proof of this result. It is an open problem (the famous \textit{Bochner--Riesz conjecture}, cf.\ \cite{So08, CaSj72, Ta04,BoGu11, Gu16}) in the case of Bochner--Riesz means (where $F=(1-|\cdot|)_+^\delta$, $\delta>0$) whether the operators $(1+\Delta)_+^\delta$ are bounded on $L^p(\R^d)$ whenever $\delta > \max\{d\left|1/p-1/2\right|-1/2,0\}$.

Regarding \textit{$p$-specific} $L^p$-spectral multiplier theorems for sub-Laplacians in more general settings, much fewer results featuring the topological dimension $d$ are available so far. However, in \cite{MaMueNi19}, A.~Martini, D.~Müller, and S.~Nicolussi Golo showed for a large class of smooth second-order real differential operators associated to a sub-Riemannian structure on smooth $d$-dimensional manifolds that regularity of order $s\ge d\left|1/p-1/2\right|$ is necessary for having $L^p$-spectral multiplier estimates. In particular, this result applies to all sub-Laplacians on Carnot groups, and Grushin operators, which are the subject of the present paper.

Quite recently, P.~Chen and E.~M.~Ouhabaz \cite{ChOu16} proved a partial result for a $p$-specific $L^p$-spectral multiplier estimate in the case of the Grushin operator $L$ acting on $\R^d=\R^{d_1}\times\R^{d_2}$, $d_1,d_2\ge 1$, given by
\[
L
= -\sum_{j=1}^{d_1} \partial_{x_j}^2 - \bigg( \sum_{j=1}^{d_1} |x_j|^2\bigg) \sum_{k=1}^{d_2}\partial_{y_k}^2
= -\Delta_{x} - |x|^2\Delta_{y} .
\]
Here $x\in\R^{d_1}$, $y\in\R^{d_2}$ shall denote the two layers of a given point in $\R^d$, while $\Delta_{x},\Delta_{y}$ are the corresponding partial Laplacians, and $|x|$ is the Euclidean norm of $x$. The Grushin operator is positive, self-adjoint, and hypoelliptic according to a celebrated theorem by Hörmander \cite{Hoe67}, but not elliptic on the plane $x=0$. In \cite{ChOu16}, it is proved that $F(L)$ extends to a bounded operator on $L^p(\R^d)$ whenever
\[
\norm{F}_{\sloc,s} < \infty
\quad\text{for some } s > D (1/p - 1/2 ),
\]
where $D:=\max\{d_1+d_2,2d_2\}$ and $1< p\le p_{d_1,d_2}$, with
\begin{equation}\label{eq:p}
p_{d_1,d_2}:=\min\Big\{ \frac{2d_1}{d_1+2},\frac{2(d_2+1)}{d_2+3} \Big\}.
\end{equation}
As suspected by P.\ Chen and E.\ M.\ Ouhabaz in \cite{ChOu16}, one might expect that this result holds true with $D$ being replaced by the topological dimension $d=d_1+d_2$. However, their result yields the optimal threshold at least if $d_1\ge d_2$.

A similar phenomenon as in \cite{ChOu16} had already occurred earlier in \cite{MaSi12}, where A.~Martini and A.~Sikora proved a Mikhlin--Hörmander type result for the Grushin operator $L$ with threshold $s>D/2$, which was later improved in \cite{MaMue14a} by A.~Martini and D.~Müller to hold for the topological dimension $d$ in place of $D$. The approaches of \cite{MaSi12} and \cite{MaMue14a} rely both on weighted Plancherel estimates for the integral kernels of $F(L)$, which are derived by pointwise estimates for Hermite functions. In \cite{MaSi12}, the employed weights are given by $w_\gamma(x,y)=|x|^\gamma$, $\gamma>0$. In principle, the arguments work out for $\gamma<d_2/2$, but unfortunately, it is necessary to take an integral over the weight $|x|^\gamma$ at some point, which forces $\gamma<d_1/2$, which in turn yields $s>D/2$ in place of $s>d/2$ as a threshold. In \cite{MaMue14a}, A.~Martini and D.~Müller employ the weights $w_\gamma(x,y)=|y|^\gamma$ in the second layer $y$, together with a rescaling factor in the first layer. Using the weights $|y|^\gamma$ does only force $\gamma<d_2/2$ when taking the integral over the weight, whence this approach provides the optimal threshold $s>d/2$. However, the weights $|y|^\gamma$ are harder to handle since a sub-elliptic estimate, which goes back to W.\ Hebisch \cite{He93}, is not applicable for these weights.

The proof of P.\ Chen and E.\ M.\ Ouhabaz relies on weighted restriction type estimates using $|x|^\gamma$ as a weight. Similar to \cite{MaSi12}, they employ Hebisch's sub-elliptic estimate and have to take an integral over the weight $|x|^\gamma$ which forces $\gamma<d_1(1/p-1/2)$, and in turn yields $s>D(1/p-1/2)$ in place of $s>d(1/p-1/2)$ as a threshold.

In this paper, we improve the result of \cite{ChOu16} and prove a $p$-specific spectral multiplier estimate with optimal threshold for $s$. Similar as in \cite{ChOu16}, we also prove a corresponding result for Bochner--Riesz multipliers. Note that \cref{thm:Lp-multiplier} only provides results if $d_1\ge 3$ and $d_2\ge 2$, and \cref{thm:riesz-means} if $d_1\ge 2$ and $d_2\ge 1$.

\begin{theorem}\label{thm:Lp-multiplier}
Let $1< p\le p_{d_1,d_2}$. Suppose that $F:\R\to \C$ is a bounded Borel function such that
\[
\norm{F}_{\sloc,s} < \infty
\quad\text{for some } s > (d_1+d_2)(1/p - 1/2).
\]
Then the operator $F(L)$ is bounded on $L^p(\R^d)$, and
\[
\norm{F(L)}_{L^p\to L^p} \le C_{p,s} \norm{F}_{\sloc,s}.
\]
\end{theorem}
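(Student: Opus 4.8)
The plan is to follow the by-now standard machinery for proving $p$-specific spectral multiplier estimates via restriction-type estimates, but with the crucial twist announced in the abstract: instead of using weighted restriction estimates with weight $|x|^\gamma$ as in \cite{ChOu16}, I would truncate the multiplier along the spectrum of $-\Delta_y$ (the Laplacian in the second layer) and exploit the sub-Riemannian geometry adapted to the Grushin structure. Recall that the Grushin operator diagonalizes via the Hermite expansion: writing $-\Delta_y$ in terms of the frequency variable $\eta\in\R^{d_2}$, for each fixed $|\eta|$ the operator $L$ restricted to that fiber is (a rescaled) Hermite operator on $\R^{d_1}$, so that $L$ has discrete spectrum $(2|\kappa|+d_1)|\eta|$ indexed by $\kappa\in\N^{d_1}$ on each fiber. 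The first step is to set up this partial spectral decomposition and the associated control-distance $\varrho$ on $\R^d$, together with the volume growth of $\varrho$-balls, which interpolates between Euclidean behavior (away from $x=0$) and the genuinely sub-Riemannian behavior near $x=0$.

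Next I would reduce the theorem, by a standard dyadic decomposition of $F$ and the Calderón–Zygmund/finite-propagation-speed argument (as in \cite{MaSi12,MaMue14a}), to proving a uniform restriction-type estimate: for $F$ supported in $[1/4,1]$ (or in a dyadic annulus after rescaling using the homogeneity of $L$ under the Grushin dilations) one needs
\[
\norm{F(L) P_{R}}_{L^p\to L^2} \lesssim R^{(d_1+d_2)(1/p-1/2)} \norm{F}_{L^2}
\]
or the corresponding weighted $L^2$-kernel bound, where $P_R$ is a spectral projection of $-\Delta_y$ to frequencies of size $\sim R$. The point of truncating along the $-\Delta_y$-spectrum is that, on the region where $|\eta|\sim R$, one can perform the Grushin rescaling $x\mapsto R^{1/2}x$ to transfer matters to the Hermite operator $H=-\Delta_x+|x|^2$ on $\R^{d_1}$, for which sharp restriction-type / Plancherel estimates with the right exponents are available (via \cite{ChOu16} or the Hermite multiplier literature, e.g. the Stein–Tomas exponent $2(d_2+1)/(d_2+3)$ entering through the Euclidean $y$-variable and the exponent $2d_1/(d_1+2)$ entering through the Hermite side). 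Summing a geometric series over the dyadic scales of $|\eta|$, and carefully tracking how the weights behave under the rescaling, should produce the claimed threshold $s>(d_1+d_2)(1/p-1/2)=d(1/p-1/2)$ with no loss — the integral that previously forced $\gamma<d_1(1/p-1/2)$ in \cite{ChOu16} is avoided because the weight is no longer integrated against the $x$-variable.

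The main obstacle I anticipate is the geometric bookkeeping near the singular plane $x=0$: one must establish weighted Plancherel-type estimates for the kernels of $F(L)$ that are uniform across the transition between the elliptic and sub-elliptic regimes, and this requires precise pointwise bounds on Hermite functions (and their rescalings) together with a correct choice of the ``truncated'' restriction estimate that is compatible with Hebisch-type sub-elliptic estimates — but now in a form that does not require integrating the weight over all of $\R^{d_1}$. Handling the endpoint behavior at $p=p_{d_1,d_2}$ and the $\max\{\cdot,1/2\}$-type lower bound on $s$ (which, as in \cite{ChOu16}, should not be binding in the stated range) will require the usual care with interpolation between the $L^2\to L^2$ bound and the restriction-type $L^p\to L^2$ bound, followed by duality and a $T T^*$ argument to pass from $L^p\to L^2$ to $L^p\to L^p$.
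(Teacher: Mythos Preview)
Your high-level strategy matches the paper's: truncate along the spectrum of $T=(-\Delta_y)^{1/2}$, combine the Hermite spectral-cluster bound in $x$ with Stein--Tomas in $y$, and feed the result into a Calder\'on--Zygmund/finite-propagation machine (the paper uses Proposition~I.22 of \cite{ChOuSiYa16}, recorded here as \cref{prop:weak-multiplier}). But the concrete restriction estimate you aim for,
\[
\|F(L)P_R\|_{L^p\to L^2}\lesssim R^{(d_1+d_2)(1/p-1/2)}\|F\|_{L^2},
\]
is too strong and does not hold. With $F$ supported near $1$ and $|\eta|\sim R$ one is forcing $[k]=2k+d_1\sim 1/R$, and the actual truncated restriction estimate (\cref{thm:rest}, \cref{eq:rest-1}) reads
\[
\|F(\sqrt L)\,\chi_\ell(L/T)\|_{L^p\to L^2}\lesssim 2^{-\ell d_2(1/p-1/2)}\|F\|_2,
\]
i.e.\ the gain in the truncation parameter is only $d_2(1/p-1/2)$, not $d(1/p-1/2)$. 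If you simply sum this series over small $\ell$ and then apply H\"older on a $\varrho$-ball of radius $2^\iota$ in the non-elliptic region (volume $\sim 2^{\iota Q}$), the $\ell=0$ term already costs $2^{\iota Q(1/p-1/2)}$, which is the homogeneous-dimension threshold, not the sharp one.

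The mechanism your plan is missing is step~(5) in the proof of \cref{prop:dyadic-mult}: for each truncation level $\ell\le\iota$ one further decomposes the support of $f$ in the $y$-variable into Euclidean balls of radius $R_\ell=2^{\iota+\ell}$, rather than the full Carnot--Carath\'eodory scale $2^{2\iota}$. The heuristic is that on the $\ell$-th piece one has $F(\sqrt L)\chi_\ell(L/T)=F([k]^{1/2}\sqrt{-\Delta_y})\chi_\ell(L/T)$ with $[k]\sim 2^\ell$, so the operator propagates in $y$ only over distance $\sim 2^{\ell}\cdot 2^\iota$. Applying H\"older on the smaller product boxes $B_{2^\iota}^{|\cdot|}\times B_{CR_\ell}^{|\cdot|}$ costs $(2^{\iota d_1}R_\ell^{d_2})^{1/p-1/2}=2^{\iota d(1/p-1/2)+\ell d_2(1/p-1/2)}$, and the $\ell d_2$ term is exactly cancelled by the gain from \cref{eq:rest-1}, yielding the sharp factor $2^{\iota d(1/p-1/2)}$. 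Making this rigorous requires controlling the leakage outside the small $y$-balls; the paper does this via the weighted Plancherel estimate with weight $|y-b|^N$ from \cite[Lemma~11]{MaMue14a} (\cref{lem:weight-center} here), not via pointwise Hermite bounds or a Hebisch-type sub-elliptic estimate as you anticipate. Finally, the passage to $L^p\to L^p$ is by H\"older on the ball after localizing the output via finite propagation speed, not by $TT^*$ and duality, which would only yield $L^p\to L^{p'}$.
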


\begin{theorem}\label{thm:riesz-means}
Let $1\le p\le p_{d_1,d_2}$. Suppose that $\delta> (d_1+d_2)( 1/ p -  1/ 2)- 1/2$. Then the Bochner--Riesz means $(1-tL)^\delta_+$ are bounded on $L^p(\R^d)$ uniformly in $t\in[0,\infty)$.
\end{theorem}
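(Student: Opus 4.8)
The plan is to reduce the uniform-in-$t$ statement to the single operator $(1-L)^\delta_+$ and then to treat separately the range $1<p\le p_{d_1,d_2}$ and the endpoint $p=1$. For the reduction one uses that $L$ is homogeneous of degree $2$ under the Grushin dilations $\delta_r(x,y)=(rx,r^2y)$ on $\R^d$: writing $D_rf=f\circ\delta_r$ one checks $L\circ D_r=r^2\,D_r\circ L$, so that $(1-tL)^\delta_+=D_{t^{-1/2}}\circ(1-L)^\delta_+\circ D_{t^{-1/2}}^{-1}$ for $t>0$. Since each $D_r$ is an isomorphism of $L^p(\R^d)$ with $\norm{D_r}_{L^p\to L^p}\norm{D_r^{-1}}_{L^p\to L^p}=1$ (indeed $D_r$ is a scalar multiple of an $L^p$-isometry), this conjugation preserves the operator norm, so $\norm{(1-tL)^\delta_+}_{L^p\to L^p}=\norm{(1-L)^\delta_+}_{L^p\to L^p}$ for all $t>0$; for $t=0$ the operator is the identity. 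Thus it suffices to bound $(1-L)^\delta_+$ on $L^p(\R^d)$.

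For $1<p\le p_{d_1,d_2}$ this follows from \cref{thm:Lp-multiplier}. One only needs the classical fact that the Bochner--Riesz multiplier satisfies $\norm{(1-\,\cdot\,)^\delta_+}_{\sloc,s}<\infty$ for every $s<\delta+1/2$: away from $\lambda=1$ the function is smooth with uniformly controlled rescaled derivatives, and near $\lambda=1$ it is, up to a smooth factor, a translate of $\lambda\mapsto\lambda_+^\delta$, which lies locally in $L^2_s$ precisely when $s<\delta+1/2$. Since the hypothesis $\delta>(d_1+d_2)(1/p-1/2)-1/2$ is the same as $\delta+1/2>(d_1+d_2)(1/p-1/2)$, we may pick $s$ with $(d_1+d_2)(1/p-1/2)<s<\delta+1/2$ and apply \cref{thm:Lp-multiplier}.

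It remains to handle $p=1$, which is the only admissible exponent when $d_2=1$ and to which \cref{thm:Lp-multiplier} does not apply. Here I would prove directly that the integral kernel $K$ of $(1-L)^\delta_+$ satisfies $\sup_u\int_{\R^d}|K(u,v)|\,dv<\infty$; by self-adjointness this yields boundedness on $L^1(\R^d)$. Decompose $(1-\,\cdot\,)^\delta_+=F_0+\sum_{j\ge1}F_j$ into Littlewood--Paley pieces around $\lambda=1$, with $F_0$ smooth and compactly supported and, for $j\ge1$, $F_j$ supported where $1-\lambda\sim 2^{-j}$ and $|F_j^{(k)}(\lambda)|\lesssim 2^{-j\delta}2^{jk}$. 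The operator $F_0(L)$ has an $L^1$-integrable kernel by the Gaussian heat kernel bounds for $L$ together with the doubling property. For the piece $F_j(L)$, with kernel $K_j$, I would bound $\sup_u\int_{\R^d}|K_j(u,v)|\,dv$ by combining: finite propagation speed for $\cos(t\sqrt L)$ with respect to the Carnot--Carath\'eodory distance $\varrho$ associated with $L$, which confines $K_j$ (up to a rapidly decaying tail) to $\{\varrho(u,v)\lesssim 2^j\}$; a Cauchy--Schwarz argument over dyadic $\varrho$-annuli; and the restriction-type estimates for $F_j(L)$ that form the core of the paper, in which the multiplier is truncated along the spectrum of $-\Delta_y$ and the decisive exponent is governed by the \emph{topological} dimension $d_1+d_2$ rather than by the \emph{homogeneous} dimension $d_1+2d_2$. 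This should yield $\sup_u\int_{\R^d}|K_j(u,v)|\,dv\lesssim_\varepsilon 2^{-j\delta}\,2^{j(d_1+d_2-1)/2}\,2^{j\varepsilon}$, which is summable in $j$ precisely when $\delta>(d_1+d_2-1)/2$, i.e.\ under the hypothesis of the theorem at $p=1$.

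The main difficulty lies entirely in this endpoint exponent for $p=1$: a soft argument relying only on the Gaussian bounds and the doubling property of $\R^d$ equipped with $\varrho$ and Lebesgue measure would only deliver the weaker condition $\delta>(d_1+2d_2-1)/2$, since $\varrho$-balls of radius $R$ can have volume comparable to $R^{d_1+2d_2}$. Replacing $d_1+2d_2$ by $d_1+d_2$ is exactly the improvement over the corresponding result of Chen and Ouhabaz, and it rests on the fibered Hermite structure of $L$ together with the $-\Delta_y$-spectral truncation worked out in the body of the paper; a secondary point is to ensure that the weighted Plancherel/restriction-type estimates entering the bound for $K_j$ are uniform in the dyadic scale $2^{-j}$ and that the $\varepsilon$-loss can be summed, which is achieved by taking $\varepsilon$ smaller than $\delta-(d_1+d_2-1)/2$.
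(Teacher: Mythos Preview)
Your reduction to $t=1$ via the Grushin dilations and your $p>1$ route through \cref{thm:Lp-multiplier} are both correct; the paper even remarks that \cref{thm:riesz-means} for $p>1$ can be read off from \cref{thm:Lp-multiplier}. However, the paper's own proof takes a different and more unified path: it does \emph{not} split cases on $p$, and it does not invoke \cref{thm:Lp-multiplier} at all. Instead, after the same dilation reduction, it applies \cref{prop:dyadic-mult} directly to $G(\lambda)=(1-\lambda^2)^\delta_+$ (localized to $|\lambda|\sim 1$ by a bump $\psi$), summing the pieces $G^{(\iota)}$ for $\iota\ge 0$; the smooth parts $G\psi$ and $\sum_{\iota<0}(G(1-\psi))^{(\iota)}$ are disposed of via the Mikhlin--H\"ormander theorem of \cite{MaMue14a}. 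Since \cref{prop:dyadic-mult} is stated for the full range $1\le p\le p_{d_1,d_2}$, this covers $p=1$ simultaneously and bypasses the Calder\'on--Zygmund machinery of \cref{prop:weak-multiplier} that underlies \cref{thm:Lp-multiplier}.

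Your separate $p=1$ sketch has a genuine gap. The spectral truncation $\chi_\ell(L/T)$ does not commute with $\cos(\tau\sqrt L)$, so the truncated pieces $F_j(\sqrt L)\chi_\ell(L/T)$ do \emph{not} inherit finite propagation in the Carnot--Carath\'eodory metric $\varrho$; their kernels are not confined to $\varrho$-balls of radius $\sim 2^j$. A naive Cauchy--Schwarz over $\varrho$-annuli then forces you to use the volume $|B^\varrho_{2^j}|\sim 2^{jQ}$ with $Q=d_1+2d_2$, which gives only the threshold $(Q-1)/2$, exactly the loss you are trying to avoid. In the paper, the correct localization---and hence the replacement of $Q$ by $d=d_1+d_2$---is obtained not from $\varrho$-propagation but from an \emph{almost finite propagation on Euclidean scales in the second layer}: see steps~(5)--(6) of the proof of \cref{prop:dyadic-mult}, which use the weighted Plancherel estimate of \cref{lem:weight-center} to control the tails of the $\ell$-truncated kernels outside $y$-balls of radius $\sim 2^\ell R$. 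Once you carry this out, you have essentially reproved \cref{prop:dyadic-mult} at $p=1$; so it is both cleaner and shorter to quote \cref{prop:dyadic-mult} directly for all $p$, as the paper does.
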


Our strategy when reaching for the optimal threshold $s>d(1/p-1/2)$ is to follow the approach by P.~Chen and E.~M.~Ouhabaz, but instead of showing \textit{weighted} restriction type estimates, we prove restriction type estimates where the operator $F(L)$ is additionally truncated along the spectrum of the Laplacian on $\R^{d_2}$. On a heuristic level, this key idea may be illustrated as follows: Via Fourier transform in the second component, the study of the operator $L$ translates into studying the family of operators $-\Delta_x+|x|^2|\eta|^2$, $\eta\in\R^{d_2}$, on $L^2(\R^{d_1})$. For fixed $\eta\in\R^{d_2}$, this operator is a rescaled version of the Hermite operator, and has discrete spectrum consisting of the eigenvalues $[k]|\eta|$, where $[k]=2k+d_1$ and $k\in\N$. Moreover, the operator $T:=(-\Delta_y)^{1/2}$ translates into the multiplication operator $|\eta|$ via  Fourier transform in the second component. The operators $L$ and $T$ admit a joint functional calculus, and since $[k]|\eta|/|\eta|=[k]$, multiplication with the operator $\chi_k(L/T)$ (where $\chi_k:\R\to\C$ shall denote the indicator function of $\{2k+d_1\}$) corresponds to picking the $k$-th eigenvalue on $L^2(\R^{d_1})$ for every $\eta\in\R^{d_2}$ simultaneously. This is an observation that has been already been exploited earlier, for instance in \cite[Lemma 11]{MaMue14a}, and in \cite{MueRiSt95,MueRiSt96}. Since $r \sim [k]^{-1}$ on the support of a joint multiplier $F(\lambda)\chi_k(\lambda/r)$ whenever $F$ is supported away from the origin, the multiplication of an operator $F(L)$ by $\chi_k(L/T)$ is referred to as a \textit{truncation along the spectrum of $T$} in the following. The benefit of this truncation is as follows: Since $L$ and $T$ admit a joint functional calculus, we have
\[
F(L) \chi_k(L/T) = F([k] T) \chi_k(L/T).
\]
Thus for every $k\in\N$, we may replace the operator $L$ by the Laplacian in the second layer $y\in\R^{d_2}$, whence one might hope that on each “eigenspace” associated to $k$ the underlying sub-Riemannian geometry behaves Euclidean up to a scaling by $k$ in the second layer. In the proofs of \cref{thm:Lp-multiplier} and \cref{thm:riesz-means}, we will take advantage of this perspective in the case where $k\in\N$ is small.

\medskip

This article is organized as follows: In \cref{sec:cc}, we recall the main facts concerning the sub-Riemannian geometry that is naturally associated to the Grushin operator $L$. In \cref{sec:rest}, we recall the essentials of the joint functional calculus of $L$ and $T$ and prove the truncated restriction type estimates mentioned above. \Cref{sec:proof} is devoted to the proof of \cref{thm:Lp-multiplier} and \cref{thm:riesz-means}, where also a closer analysis of the underlying sub-Riemannian geometry takes place.

\medskip

Finally, we briefly fix our notation. For us, zero shall be contained in the set of all natural numbers $\N$. The space of (equivalence classes of) integrable simple functions on $\R^n$ will be denoted by $D(\R^n)$, while $\S(\R^n)$ shall denote the space of Schwartz functions on $\R^n$. The indicator function of a subset $A\subseteq \R^n$ will be denoted by $\chi_A$. For a function $f\in L^1(\R^n)$, the Fourier transform $\hat f$ is given by
\[
\hat f(\xi) = \int_{\R^n} f(x) e^{-i\xi x}\, dx,\quad
\xi\in\R^n,
\]
while the inverse Fourier transform $\check f$ is given by
\[
\check f(x) = (2\pi)^{-n} \int_{\R^n} f(\xi) e^{ix\xi}\, d\xi,\quad
x\in\R^n.
\]
Constants may vary from line to line, but they will be occasionally denoted by the same letter. We write $A\lesssim B$ if $A\le C B$ for a constant $C$. If $A\lesssim B$ and $B\lesssim A$, we write $A\sim B$. Moreover, we fix the following dyadic decomposition throughout this article. Let $\chi:\R\to [0,1]$ be an even bump function supported in $[-2,-1/2]\cup [1/2,2]$ such that
\[
\sum_{j\in\Z} \chi_j(\lambda) = 1 \quad \text{for } \lambda\neq 0,
\]
where $\chi_j$ is given by
\begin{equation}\label{eq:dyadic}
\chi_j(\lambda):=\chi(\lambda/2^j) \quad \text{for }j\in\Z.
\end{equation}
With this setup, we have in particular $|\lambda|\sim 2^j$ for all $\lambda\in\supp\chi_j$.

\medskip

\thanks{\textsc{Acknowledgment.} I would like to express my gratitude to my advisor Professor Dr. Detlef Müller for his constant support and numberless helpful suggestions.}

\section{The sub-Riemannian geometry of the Grushin operator}
\label{sec:cc}

Let $\varrho$ denote the Carnot--Carathéodory distance associated to the Grushin operator $L$, i.e., for $z,w\in \R^d$, the distance $\varrho(z,w)$ is given by the infimum over all lengths of horizontal curves $\gamma:[0,1]\to\R^d$ joining $z$ with $w$ (cf.\ Section III.4 of \cite{VaSaCo92}). Due to the Chow--Rashevkii theorem (cf.\ Proposition III.4.1 in \cite{VaSaCo92}), $\varrho$ is indeed a metric on $\R^d$, which induces the Euclidean topology on $\R^d$. In our setting, the Carnot--Carathéodory distance possesses the following characterization (cf.\ Proposition 3.1 in \cite{JeSa87}): If $z,w\in\R^d$, then
\begin{equation}\label{eq:cc-char}
\varrho(z,w) = \sup_{\psi\in \Lambda} (\psi(z)-\psi(w)),
\end{equation}
where $\Lambda$ denotes the set of all locally Lipschitz continuous functions $\psi:\R^d\to \R$ such that
\begin{equation}\label{eq:cc-domain-grushin}
|\nabla_x\psi(x,y)|^2 + |x|^2 |\nabla_y\psi(x,y)|^2 \le 1
\quad \text{for a.e. } (x,y)\in \R^{d_1}\times\R^{d_2}.
\end{equation}
In the following, let $B_R^{\varrho}(a,b)$ denote the ball of radius $R\ge 0$ centered at $(a,b)\in \R^{d_1}\times\R^{d_2}$ with respect to the distance $\varrho$. The following statement summarizes the main properties of the sub-Riemannian geometry associated to $L$ that we need later.

\begin{proposition}\label{prop:cc-distance}
The following statements hold:
\begin{enumerate}
\item For all $(x,y),(a,b)\in \R^{d_1}\times \R^{d_2}$,
\[
\varrho((x,y),(a,b)) \sim |x-a| +
\begin{cases}
\frac{|y-b|}{|x|+|a|} & \text{if } |y-b|^{1/2}< |x| + |a|, \\
|y-b|^{1/2} & \text{if }  |y-b|^{1/2}\ge |x| + |a| .
\end{cases}
\]
\item For all $(a,b)\in \R^{d_1}\times \R^{d_2}$ and $R>0$,
\[
|B_R^{\varrho}(a,b)| \sim R^{d_1+d_2} \max\{R,|a|\}^{d_2}.    
\]
\item There is a constant $C>0$ such that for all $(a,b)\in \R^{d_1}\times \R^{d_2}$,
\[
B_R^{\varrho}(a,b)
 \subseteq B_R^{|\,\cdot\,|}(a) \times B_{ CR^2}^{|\,\cdot\,|}(b)
  \quad \text{whenever } R\ge |a|/4.
\]
\item Let $\delta_t(x,y):=(tx,t^2y)$ for $(x,y)\in \R^{d_1}\times \R^{d_2}$. Then
\[
\varrho(\delta_t(x,y),\delta_t(a,b)) = t\varrho((x,y),(a,b)).
\]
\item $L$ possesses the \textit{finite propagation speed property} with respect to $\varrho$, i.e., whenever $f,g\in L^2(\R^d)$ are supported in open subsets $U,V\subseteq \R^d$ and $|t|< \varrho(U,V)$, then
\[
(\cos(t\sqrt L)f,g) = 0.
\]
\end{enumerate}
\end{proposition}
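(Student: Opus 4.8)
The plan is to establish the five assertions more or less independently, with (1) doing most of the heavy lifting and the rest following from it together with the scaling in (4).

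For (1), I would use the variational characterization \cref{eq:cc-char}--\cref{eq:cc-domain-grushin}. The upper bound "$\lesssim$" is obtained by exhibiting explicit horizontal curves: to move in the $x$-direction one uses a straight segment (which costs Euclidean length $|x-a|$, since the constraint \cref{eq:cc-domain-grushin} bounds $|\nabla_x\psi|\le 1$); to move in the $y$-direction one first travels to a point where $|x|$ is comparable to $\max\{|x|+|a|,|y-b|^{1/2}\}$, then moves in $y$ at speed proportional to that value of $|x|$, then returns. A short computation with these curves gives the claimed two-regime upper bound. For the matching lower bound "$\gtrsim$", I would plug suitable test functions $\psi$ into \cref{eq:cc-char}: the function $\psi(x,y)=\pm(x_j-a_j)$ gives $\varrho\gtrsim|x-a|$; and a function of the form $\psi(x,y)=\phi(x)\cdot\frac{y-b}{|y-b|}$ with $|\nabla_x\phi|\,|x|+|\phi|\lesssim 1$ chosen to be as large as possible along the geodesic yields the $y$-part of the lower bound in each of the two regimes. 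This dichotomy—whether $|y-b|^{1/2}$ is smaller or larger than $|x|+|a|$—reflects whether the geodesic stays in the "elliptic" region or must pass near the degenerate plane $x=0$, and getting the test functions to see this correctly is the main obstacle in the whole proposition.

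Assertion (2) follows from (1) by integration. Fixing $(a,b)$ and $R>0$, the set of $(x,y)$ with $\varrho((x,y),(a,b))\le R$ satisfies $|x-a|\lesssim R$, so $|x|\lesssim \max\{R,|a|\}=:M$; and for each such $x$, the admissible $y$ lie in a Euclidean ball of radius $\sim R(|x|+|a|)+R^2 \sim RM$ (splitting into the two regimes of (1) shows both cases give a $y$-ball of radius comparable to $RM$). Hence $|B_R^\varrho(a,b)|\sim R^{d_1}\cdot (RM)^{d_2}=R^{d_1+d_2}\max\{R,|a|\}^{d_2}$, up to checking the lower bound with the same order, which again comes from the two-sided estimate in (1). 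Assertion (3) is immediate from (1) as well: if $R\ge|a|/4$ then $|x|\le|x-a|+|a|\lesssim R$, so on the ball $|x|+|a|\lesssim R$, and then $\varrho\le R$ forces $|y-b|\lesssim R(|x|+|a|)+R^2\lesssim R^2$, while trivially $|x-a|\le R$; adjusting constants gives the stated containment $B_R^\varrho(a,b)\subseteq B_R^{|\cdot|}(a)\times B_{CR^2}^{|\cdot|}(b)$.

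Assertion (4) is a direct consequence of the homogeneity of \cref{eq:cc-domain-grushin} under $\delta_t$: a curve $\gamma$ is horizontal for $L$ iff $\delta_t\circ\gamma$ is, with length scaled by $t$ (the $x$-component scales by $t$ and the vector fields $|x|\partial_{y_k}$ are invariant under $(x,y)\mapsto(tx,t^2y)$), so taking infima over horizontal curves gives $\varrho(\delta_t z,\delta_t w)=t\,\varrho(z,w)$; alternatively one checks that $\psi\mapsto t^{-1}\psi\circ\delta_t$ is a bijection of $\Lambda$ and applies \cref{eq:cc-char}. Finally, assertion (5)—finite propagation speed for $\cos(t\sqrt L)$ with respect to $\varrho$—is the standard consequence of $L$ being a self-adjoint second-order operator whose principal symbol is controlled by the cometric dual to the sub-Riemannian metric: one has the energy/commutator estimate making $\partial_t^2 + L$ hyperbolic with speed $1$ in the metric $\varrho$, so solutions to the wave equation propagate no faster than the metric balls; this is where \cref{eq:cc-char} is used once more, since the Carnot--Carathéodory distance is exactly the one for which the symbol bound $|\xi_x|^2+|x|^2|\xi_y|^2$ matches. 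I would cite the general finite-propagation-speed results for such operators (e.g. the arguments in \cite{Me80} or the sub-Laplacian literature) rather than reprove it, noting only that \cref{eq:cc-domain-grushin} is precisely the condition guaranteeing the relevant Davies--Gaffney-type estimate.
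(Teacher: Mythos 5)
Your overall structure matches the paper's proof closely: you derive (3) from (1), you derive (4) from the dilation invariance of the characterization \cref{eq:cc-char}, and for (5) you cite the general finite-propagation-speed literature, exactly as the paper does (which cites Proposition~4.1 of \cite{RoSi08} and Melrose \cite{Me84}). The main difference is that for (1) and (2) you sketch proofs from scratch, whereas the paper simply cites Proposition~5.1 of \cite{RoSi08}; this is a legitimate alternative, but as written your sketch has two issues worth flagging.

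First, the test function you propose for the lower bound in (1), $\psi(x,y)=\phi(x)\cdot\frac{y-b}{|y-b|}$ subject to $|\nabla_x\phi|\,|x|+|\phi|\lesssim 1$, does not obviously belong to $\Lambda$: for any $\psi$ that is a product of a nonconstant function of $x$ with a function that grows in $y$ (such as $(y-b)\cdot e$ or $|y-b|$), the term $|\nabla_x\psi|^2$ is unbounded in $y$, so the constraint \cref{eq:cc-domain-grushin} fails globally unless $\phi$ is locally constant. The admissible test functions for the $y$-part of the lower bound must be chosen more carefully (or one works directly with the primal definition by bounding the $y$-displacement of any horizontal curve in terms of its length and the maximal $|x|$ along it, which is in fact the cleaner route). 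This is precisely the subtle point that the reference \cite{RoSi08} handles, and your sketch glosses over it.

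Second, in (3) the stated inclusion uses the Euclidean ball $B_R^{|\cdot|}(a)$ of radius exactly $R$, not $CR$. Your argument that ``trivially $|x-a|\le R$'' only follows from the qualitative lower bound $\varrho\gtrsim|x-a|$ of part (1), which gives $|x-a|\lesssim R$ rather than $\le R$; the choice $\psi(x,y)=\pm(x_j-a_j)$ only yields $\varrho\gtrsim|x-a|_\infty$. To get the exact inequality one should use the test function $\psi(x,y)=|x-a|$ (or $\psi(x,y)=(x-a)\cdot e$ maximized over unit vectors $e$), which is 1-Lipschitz in $x$ and independent of $y$, so it satisfies \cref{eq:cc-domain-grushin} and gives $|x-a|\le\varrho((x,y),(a,b))$ with constant one. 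This is exactly the small but necessary step the paper carries out explicitly before invoking (1) for the $y$-component.

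Apart from these two points your plan is sound, and the derivation of (2) from (1) by fibering over $x$ and measuring the admissible $y$-ball of radius $\sim R\max\{R,|a|\}$ is correct, with the understanding that one should check that a positive proportion (in $x$) of the fibers actually attains that radius for the lower bound.
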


\begin{proof}
The estimates in (1) and (2) are part of Proposition 5.1 in \cite{RoSi08}. The inclusion in (3) is a consequence of (1): Since the function $\psi$ defined by $\psi(x,y):=|x-a|$ satisfies \cref{eq:cc-domain-grushin}, the characterization \cref{eq:cc-char} yields
\[
|x-a|\le \varrho((x,y),(a,b)).
\]
Thus, if we suppose $(x,y)\in B_R^{\varrho}(a,b)$ for $R\ge |a|/4$, then the inequality above implies $x\in B_R^{|\cdot|}(a)$, and $|x|\le |x-a|+|a|< 5R$. Moreover, if $|y-b|^{1/2}< |x| + |a|$, then (1) yields
\begin{align*}
|y-b|
 \lesssim (|x|+|a|) \varrho((x,y),(a,b)) 
 < 9R^2,
\end{align*}
and if $|y-b|^{1/2}\ge |x| + |a|$, then $|y-b| \lesssim \varrho((x,y),(a,b))^2 < R^2$, which proves (3). The scaling invariance in (4) is an immediate consequence of the characterization \cref{eq:cc-char}. For the finite propagation speed property, see Proposition 4.1 of \cite{RoSi08}, or alternatively the approach of Melrose in \cite[Proposition 3.4]{Me84}.
\end{proof}

The finite propagation speed property will be of fundamental importance in the proofs of \cref{thm:Lp-multiplier} and \cref{thm:riesz-means}. Moreover, note that the volume estimate in part (2) of \cref{prop:cc-distance} yields in particular that the metric measure space $(\R^d,\varrho,|\cdot|)$ (with $|\cdot|$ denoting the Lebesgue measure) is a space of homogeneous type with homogeneous dimension $Q = d_1+2d_2$.

\section{Truncated restriction type estimates}
\label{sec:rest}

In this section, we prove restriction type estimates where the multiplier is additionally truncated along the spectrum of the Laplacian on $\R^{d_2}$. As in \cite{ChOu16}, the idea is to apply a discrete restriction estimate in the variable $x\in\R^{d_1}$ and the classical Stein--Tomas restriction estimate in $y\in\R^{d_2}$. Due to the conditions $1\le p\le 2d_1/(d_1+2)$ and $1\le p \le 2(d_2+1)/(d_2+3)$ in the corresponding restriction type estimates, we have to assume $1\le p\le p_{d_1,d_2}$ in \cref{thm:rest} (with $p_{d_1,d_2}$ being defined as in \cref{eq:p}).

We first discuss the spectral decomposition of the Grushin operator $L$. Let $\F_2 : L^2(\R^d) \to L^2(\R^d)$ denote the Fourier transform in the second component, i.e.,
\[
\F_2 f(x,\eta) = \int_{\R^{d_2}} f(x,y) e^{-i\eta y} \,dy, \quad f\in\S(\R^d).
\]
We will also write $f^\eta(x)=\F_2 f(x,\eta)$ in the following. Then
\[
(Lf)^\eta = (-\Delta_x + |x|^2|\eta|^2)f^\eta.
\]
For fixed $\eta\in\R^{d_2}$, the operator
\[
L^\eta := -\Delta_x + |x|^2|\eta|^2 \quad \text{on } L^2(\R^{d_1})
\]
is a rescaled version of the Hermite operator $H=-\Delta+|x|^2$ on $\R^{d_1}$. It is well-known \cite[Section 1.1]{Th93} that $H$ has discrete spectrum consisting of the eigenvalues
\[
[k]:=2k+d_1,\quad k\in\N.
\]
For a multiindex $\nu\in\N^{d_1}$, let $\Phi_\nu$ denote the $\nu$-th Hermite function on $\R^{d_1}$, i.e.,
\[
\Phi_\nu(x) := \prod_{j=1}^{d_1} h_{\nu_j}(x_j),\quad x\in \R^{d_1},
\]
where, for $\ell\in\N$, $h_\ell$ shall denote the $\ell$-th Hermite function on $\R$, i.e.,
\[
h_\ell(u) := (-1)^\ell (2^\ell \ell! \sqrt{\pi})^{-1/2} e^{u^2/2} \Big(\frac{d}{du}\Big)^\ell (e^{-u^2}),\quad u\in \R.
\]
The Hermite functions $\Phi_\nu$ form an orthonormal basis of $L^2(\R^{d_1})$ and are eigenfunctions of the Hermite operator $H$ since $H\Phi_\nu  = (2|\nu|_1+d_1)  \Phi_\nu$, where $|\nu|_1 =\nu_1+\ldots+\nu_{d_1}$ denotes the length of the multiindex $\nu\in\N^{d_1}$.

Furthermore, for $\eta\in\R^{d_2}$, let $\Phi_\nu^\eta$ be given by 
\[
\Phi_\nu^\eta(x) := |\eta|^{d_1/4} \Phi_\nu(|\eta|^{1/2}x),\quad x\in \R^{d_1}.
\]
Then the functions $\Phi_\nu^\eta$ form an orthonormal basis of $L^2(\R^{d_1})$ and are eigenfunctions of the operator $L^\eta$ since $L^\eta\Phi_\nu^\eta = (2|\nu|_1+d_1)|\eta| \Phi_\nu^\eta$. Thus the projection $P_k^\eta$ onto the eigenspace associated to the eigenvalue $[k]|\eta|$ of $L^\eta$ is given by
\[
P_k^\eta g = \sum_{|\nu|_1=k} ( g, \Phi_{\nu}^\eta )  \Phi_{\nu}^\eta,
\quad
g\in L^2(\R^{d_1}).
\]
In particular, the projection $P_k^\eta$ possesses an integral kernel $\mathcal K_k^\eta$ which is given by
\begin{equation}\label{eq:integral-kernel}
\mathcal K_k^\eta(x,a) = \sum_{|\nu|_1=k} \Phi_\nu^\eta(x) \Phi_\nu^\eta(a),
\quad x,a\in\R^{d_1}.
\end{equation}
Moreover, let $L_j$ and $T_k$ be the differential operators given by 
\[
L_j = (-i\partial_{x_j})^2 + |x_j|^2 \sum_{k=1}^{d_2}(-i\partial_{y_k})^2,
\qquad
T_k = -i \partial_{y_k}.
\]
Then the Grushin operator $L$ is equal to the sum $L_1+\dots+L_{d_1}$. As shown in \cite{MaSi12}, the operators $L_1,\dots,L_{d_1},T_1,\dots,T_{d_2}$ have a joint functional calculus which can be explicitly written down in terms of the Fourier transform and Hermite function expansion. In particular, the operators $L$ and $T=(|T_1|^2+\dots+|T_{d_2}|^2)^{1/2}=(-\Delta_y)^{1/2}$ have a joint functional calculus, so we can define the operators $G(L,T)$ for every Borel function $G:\R\times \R \to \C$.

\begin{lemma}\label{lem:func-calc}
For all bounded Borel functions $G:\R\times \R \to \C$,
\begin{equation*}
( G(L,T) f )^\eta(x) = G(L^\eta , |\eta|) f^\eta(x)
\end{equation*}
for all $f\in L^2(\R^d)$ and almost all $(x,\eta)\in \R^{d_1}\times\R^{d_2}$. Moreover, if $G$ is additionally compactly supported in $\R\times (\R\setminus\{0\})$, the operator $G(L,T)$ possesses an integral kernel $\mathcal K_{G(L,T)}$, which is given by
\[
\mathcal K_{G(L,T)}((x,y),(a,b)) = (2\pi)^{-d_2} \int_{\R^{d_2}} \sum_{k=0}^\infty G\big([k]|\eta|, |\eta|\big) \mathcal K_k^\eta(x,a) e^{i(y-b)\eta}\, d\eta
\]
for almost all $(x,y),(a,b)\in\R^{d_1}\times\R^{d_2}$.
\end{lemma}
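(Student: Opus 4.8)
The plan is to derive both assertions from the explicit joint functional calculus of $L_1,\dots,L_{d_1},T_1,\dots,T_{d_2}$ established in \cite{MaSi12}. After conjugating by $\F_2$, that calculus takes a transparent form: $L_j$ is transformed into the one-dimensional rescaled Hermite operator $-\partial_{x_j}^2+|x_j|^2|\eta|^2$ acting in the variable $x_j$, whose eigenfunctions are $|\eta|^{1/4}h_{\nu_j}(|\eta|^{1/2}\,\cdot\,)$ with eigenvalues $(2\nu_j+1)|\eta|$, while $T_k$ is transformed into multiplication by $\eta_k$. Since the $\Phi_\nu^\eta=\prod_j|\eta|^{1/4}h_{\nu_j}(|\eta|^{1/2}x_j)$ form an orthonormal eigenbasis simultaneously for all these fiber operators, one obtains, for every bounded Borel function $H:\R^{d_1}\times\R^{d_2}\to\C$,
\[
\bigl(H(L_1,\dots,L_{d_1},T_1,\dots,T_{d_2})f\bigr)^\eta(x)
= \sum_{\nu\in\N^{d_1}} H\bigl((2\nu_1{+}1)|\eta|,\dots,(2\nu_{d_1}{+}1)|\eta|,\eta_1,\dots,\eta_{d_2}\bigr)\,(f^\eta,\Phi_\nu^\eta)\,\Phi_\nu^\eta(x)
\]
for all $f\in L^2(\R^d)$ and almost all $(x,\eta)$.

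To obtain the first claim I would apply this with $H(\lambda_1,\dots,\lambda_{d_1},\tau_1,\dots,\tau_{d_2}):=G\bigl(\lambda_1+\dots+\lambda_{d_1},(\tau_1^2+\dots+\tau_{d_2}^2)^{1/2}\bigr)$; by the composition rule for functional calculi together with $L=L_1+\dots+L_{d_1}$ and $T=(|T_1|^2+\dots+|T_{d_2}|^2)^{1/2}$, the left-hand side becomes $(G(L,T)f)^\eta(x)$. Grouping the multiindices $\nu$ according to their length $|\nu|_1=k$ and recalling $[k]=2k+d_1$ as well as the definition of $P_k^\eta$, the right-hand side collapses to $\sum_{k=0}^\infty G([k]|\eta|,|\eta|)\,P_k^\eta f^\eta(x)$. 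Since $|\eta|$ acts as a scalar on the fiber $L^2(\R^{d_1})$ and $L^\eta=\sum_{k}[k]|\eta|\,P_k^\eta$ is the spectral resolution of $L^\eta$, this is precisely $G(L^\eta,|\eta|)f^\eta(x)$, proving the first assertion. (Alternatively one may first check the fibered identity for product functions $G(\lambda,\tau)=G_1(\lambda)G_2(\tau)$, using that $L$ and $T$ commute so that $G(L,T)=G_1(L)G_2(T)$ and that $\F_2$ diagonalizes $L$ into the direct integral $\int^\oplus L^\eta\,d\eta$ and $T$ into multiplication by $|\eta|$, then extend to all bounded Borel $G$ by a monotone-class argument.)

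For the kernel formula, suppose $\supp G$ is a compact subset of $\R\times(\R\setminus\{0\})$, so that $G([k]|\eta|,|\eta|)\neq 0$ forces $|\eta|$ into a fixed compact subset of $(0,\infty)$ and $[k]|\eta|$ to stay bounded; hence only finitely many $k$ contribute. Inserting the integral kernel $\mathcal K_k^\eta$ of $P_k^\eta$ from \eqref{eq:integral-kernel}, writing $f^\eta(a)=\int_{\R^{d_2}} f(a,b)e^{-ib\eta}\,db$, and applying the inverse partial Fourier transform $\F_2^{-1}=(2\pi)^{-d_2}\F_2^{*}$, one obtains for $f\in D(\R^d)$
\[
G(L,T)f(x,y)
= \int_{\R^{d_1}\times\R^{d_2}}\!\Bigl[(2\pi)^{-d_2}\!\int_{\R^{d_2}}\sum_{k=0}^{\infty} G([k]|\eta|,|\eta|)\,\mathcal K_k^\eta(x,a)\,e^{i(y-b)\eta}\,d\eta\Bigr] f(a,b)\,da\,db,
\]
from which the asserted kernel is read off; since $D(\R^d)$ is dense in $L^2(\R^d)$ and $G(L,T)$ is bounded (with norm at most $\|G\|_\infty$), this determines the operator. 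The only point requiring genuine care — and the expected main (though minor) obstacle — is the interchange of the $\eta$-integration with the $(a,b)$-integration: it is legitimate because the $k$-sum is finite, $\eta$ ranges over a fixed compact set bounded away from the origin, $G$ is bounded, and, by Mehler's formula (or the classical pointwise bounds for Hermite functions), $\mathcal K_k^\eta(x,a)$ decays rapidly in $(x,a)$ with constants locally uniform in $\eta\in\R^{d_2}\setminus\{0\}$; together with $f\in D(\R^d)$ this makes the triple integral absolutely convergent, so Fubini's theorem applies.
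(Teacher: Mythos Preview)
Your proposal is correct and follows essentially the same route as the paper: the paper's proof consists solely of the reference ``See Proposition 5 of \cite{MaSi12}, and its proof,'' and your argument is precisely an explicit unpacking of that reference---you invoke the joint functional calculus of $L_1,\dots,L_{d_1},T_1,\dots,T_{d_2}$ from \cite{MaSi12}, specialize to $G(\sum_j\lambda_j,|\tau|)$, group terms by $|\nu|_1=k$, and then read off the kernel after justifying the Fubini step via the compact support of $G$ and the decay of $\mathcal K_k^\eta$. There is no substantive difference in approach.
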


\begin{proof}
See Proposition 5 of \cite{MaSi12}, and its proof.
\end{proof}

The proof of the truncated restriction type estimates for the Grushin operator relies on the following restriction type estimate for $L^\eta$.

\begin{proposition}\label{thm:discrete-rest}
Let $1\le p \le 2d_1/(d_1+2)$. Then, for all $g\in D(\R^{d_1})$ and $\eta\in\R^{d_2}$,
\begin{equation*}
\norm{P_k^\eta g}
 \le C_{p} |\eta|^{\frac{d_1}{2}( 1/p - 1/2)} [k]^{\frac {d_1} 2 ( 1/p - 1/2) -1/2} \norm{g}_p.
\end{equation*}
\end{proposition}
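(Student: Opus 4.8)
The plan is to reduce the statement about $L^\eta$ to a statement about the Hermite operator $H = -\Delta + |x|^2$ on $\R^{d_1}$ by exploiting the scaling relation $L^\eta \Phi_\nu^\eta = [k]|\eta|\Phi_\nu^\eta$, and then to invoke the known restriction-type (spectral projection) estimate for the Hermite operator. Concretely, if $\Pi_k$ denotes the projection of $L^2(\R^{d_1})$ onto the $H$-eigenspace associated with the eigenvalue $[k] = 2k + d_1$, then the classical discrete restriction estimate (due to Koch--Tataru, and in the endpoint form used here going back to work on Hermite spectral projections; see also the restriction estimates employed in \cite{ChOu16} and \cite{MaSi12}) states that for $1 \le p \le 2d_1/(d_1+2)$ one has
\[
\norm{\Pi_k h} \le C_p [k]^{\frac{d_1}{2}(1/p - 1/2) - 1/2} \norm{h}_p
\]
for all $h \in D(\R^{d_1})$. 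The exponent of $[k]$ here is exactly the one appearing in the claimed bound; the factor $|\eta|^{\frac{d_1}{2}(1/p-1/2)}$ will come out of the rescaling.

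First I would make the change of variables explicit. Writing $\delta_\lambda g(x) := g(\lambda x)$ for $\lambda > 0$, the relation $\Phi_\nu^\eta(x) = |\eta|^{d_1/4}\Phi_\nu(|\eta|^{1/2}x)$ means that $P_k^\eta = D_{|\eta|^{1/2}} \Pi_k D_{|\eta|^{1/2}}^{-1}$ in a suitable unitary normalization; more precisely, if $U_\lambda g(x) := \lambda^{d_1/4} g(\lambda^{1/2} x)$ is the $L^2$-unitary dilation, then a direct computation from the kernel formula \cref{eq:integral-kernel} gives $P_k^\eta = U_{|\eta|} \Pi_k U_{|\eta|}^{-1}$. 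Hence
\[
\norm{P_k^\eta g} = \norm{U_{|\eta|}\Pi_k U_{|\eta|}^{-1} g} = \norm{\Pi_k U_{|\eta|}^{-1} g},
\]
using unitarity of $U_{|\eta|}$ on $L^2$. Applying the Hermite restriction estimate to $h := U_{|\eta|}^{-1} g$ yields
\[
\norm{P_k^\eta g} \le C_p [k]^{\frac{d_1}{2}(1/p-1/2) - 1/2} \norm{U_{|\eta|}^{-1} g}_p.
\]
Finally I would compute the $L^p$-norm of the dilation: $\norm{U_{|\eta|}^{-1} g}_p = \norm{U_{|\eta|^{-1}} g}_p$, and $\norm{U_\lambda g}_p = \lambda^{d_1/4} \lambda^{-d_1/(2p)} \norm{g}_p = \lambda^{d_1(1/4 - 1/(2p))} \norm{g}_p$, so with $\lambda = |\eta|^{-1}$ one gets $\norm{U_{|\eta|^{-1}} g}_p = |\eta|^{\frac{d_1}{2}(1/p - 1/2)}\norm{g}_p$ (note the sign works out since $1/4 - 1/(2p) \le 0$ and we are dilating by $|\eta|^{-1}$, flipping it to a positive power of $|\eta|$ with exponent $\frac{d_1}{2}(1/p - 1/2)$). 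Combining the last two displays gives exactly the asserted inequality.

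The only genuinely nontrivial input is the Hermite restriction estimate for $\Pi_k$ itself in the stated range $1 \le p \le 2d_1/(d_1+2)$ with the sharp $[k]$-power; this is the analogue, for the Hermite operator, of the Stein--Tomas restriction estimate, and is where the bound on $p$ originates (it degenerates to the trivial $L^2 \to L^2$ estimate at $p = 2$ and is genuine at the Stein--Tomas endpoint). I would cite this from the literature rather than reprove it --- it is precisely the ``discrete restriction estimate in the variable $x \in \R^{d_1}$'' the author alludes to at the start of \cref{sec:rest}. The rest of the argument is bookkeeping with the explicit form of the eigenfunctions $\Phi_\nu^\eta$ and the behaviour of $L^p$-norms under dilations, so no step there should present a real obstacle; the one point to be careful about is getting the dilation exponent and its sign right, which I have traced through above, and checking that $g \in D(\R^{d_1})$ is preserved under $U_{|\eta|}^{-1}$ (it is, since dilations map integrable simple functions to integrable simple functions).
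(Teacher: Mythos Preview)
Your proposal is correct and follows essentially the same approach as the paper: the paper reduces to the case $|\eta|=1$ ``via substitution'' and then cites Koch--Tataru \cite{KoTa05} for the Hermite spectral projection estimate, which is precisely what you do, only with the scaling/dilation computation written out explicitly via the unitary $U_{|\eta|}$. Your bookkeeping with the dilation exponents is accurate.
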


\begin{proof}
Via substitution, the proof of the estimate can be reduced to the case where $|\eta|=1$ (cf.\ Proposition 3.2 of \cite{ChOu16}). For the case $|\eta|$ = 1, see Corollary 3.2 of \cite{KoTa05}. Alternatively, for $1\le p< 2d_1/(d_1+2)$, this result can also be found in \cite[Theorem 3]{Ka95} and \cite[Proposition II.8]{ChOuSiYa16} (in conjunction with Mehler's formula).
\end{proof}

Another ingredient for the proof of the restriction type estimates are pointwise estimates for Hermite functions. In the following, we let
\begin{equation}\label{eq:Hermite-kernel}
H_k^\eta(x):=\mathcal K_k^\eta(x,x),\quad  x \in\R^{d_1}.
\end{equation}

\begin{lemma}\label{thm:hermite}
If $d_1\ge 2$, then, for all $k\in\N$ and $\eta\in\R^{d_2}$,
\[
H_k^\eta(x) \leq
\begin{cases}
C |\eta|^{d_1/2} [k]^{d_1 / 2-1} & \text { for all } x \in \mathbb{R}^{d_1}, \\
C  |\eta|^{d_1/2} \exp (-c|\eta||x|_{\infty}^{2}) & \text { when }|\eta||x|_{\infty}^{2} \geq 2[k].
\end{cases}
\]
\end{lemma}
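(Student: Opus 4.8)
The plan is to reduce everything to the one-dimensional Hermite case by exploiting the product structure $\Phi_\nu^\eta(x)=\prod_{j=1}^{d_1} h_{\nu_j}^{|\eta|}(x_j)$, where $h_\ell^{|\eta|}(u)=|\eta|^{1/4}h_\ell(|\eta|^{1/2}u)$, and by rescaling in $x$ to normalize $|\eta|=1$. After the substitution $x\mapsto|\eta|^{-1/2}x$ one has $H_k^\eta(x)=|\eta|^{d_1/2}H_k^{e}(|\eta|^{1/2}x)$ for a unit vector $e$, so it suffices to prove the two bounds for $|\eta|=1$: namely $H_k(x)=\sum_{|\nu|_1=k}\Phi_\nu(x)^2\le C[k]^{d_1/2-1}$ for all $x$, and $H_k(x)\le C\exp(-c|x|_\infty^2)$ when $|x|_\infty^2\ge 2[k]$.

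First I would recall the classical pointwise bounds for the one-dimensional Hermite functions $h_\ell$, in the normalization where $h_\ell$ is an eigenfunction of $-d^2/du^2+u^2$ with eigenvalue $2\ell+1$: there is a turning point at $|u|\sim (2\ell+1)^{1/2}$, with $|h_\ell(u)|\lesssim (2\ell+1)^{-1/12}$ in the oscillatory region, polynomial (Airy-type) decay near the turning point, and in the forbidden region $u^2\ge 2(2\ell+1)$ one has the sub-Gaussian bound $|h_\ell(u)|\lesssim e^{-cu^2}$ (this last fact is exactly the regime used for the second estimate and follows from, e.g., the generating-function/Mehler-formula argument or standard asymptotics — see Thangavelu \cite{Th93}). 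The crude global bound $\|h_\ell\|_\infty\lesssim 1$ together with these is all I need.

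For the first (uniform) bound: write $\Phi_\nu(x)^2=\prod_j h_{\nu_j}(x_j)^2$ and sum over $|\nu|_1=k$. The key analytic input is the identity/known estimate for the one-variable sum $\sum_{\ell=0}^{m} h_\ell(u)^2$, which is uniformly bounded by a constant times $(2m+1)^{1/2}$ — this is the Christoffel–Darboux estimate for Hermite functions, or alternatively follows from $\sum_{\ell=0}^{m} h_\ell(u)^2 = \big(h_m'(u)^2 + (2m+1-u^2)h_m(u)^2 + \text{lower order}\big)/2$ type formulas combined with the pointwise bounds above; in any case $\sup_u \sum_{\ell=0}^m h_\ell(u)^2\lesssim m^{1/2}$ is standard. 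I would then bound $H_k(x)=\sum_{|\nu|_1=k}\prod_j h_{\nu_j}(x_j)^2$ by freezing the first $d_1-1$ coordinates at their $L^\infty$ bound $\lesssim 1$ and summing the last variable over $\nu_{d_1}\le k$, picking up a factor $\lesssim k^{1/2}$ for that coordinate; iterating the ``sum one coordinate, bound the rest'' bookkeeping over all $d_1$ coordinates — more precisely, using that the number of ways to split $k$ among $d_1-1$ coordinates interacts with the $m^{1/2}$ gain — yields the exponent $d_1/2-1$. (Concretely: $\sum_{|\nu|_1=k}\prod_j\|h_{\nu_j}\|_\infty^2$ is too lossy, so one must use the $\sum_{\ell\le m}h_\ell^2\lesssim m^{1/2}$ gain in each of the $d_1$ variables, which converts the naive $k^{d_1-1}$ count into $k^{d_1/2-1}$; this is precisely the computation in Lemma 2.3 of \cite{ChOu16} or Koch–Tataru type estimates.)

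For the second (off-diagonal Gaussian) bound: suppose $|x|_\infty^2\ge 2[k]=2(2k+d_1)$, so there is a coordinate, say $x_{j_0}$, with $x_{j_0}^2\ge 2[k]/1 \ge 2(2\nu_{j_0}+1)$ for every $\nu$ with $|\nu|_1=k$ (since $\nu_{j_0}\le k$). Hence for that coordinate the forbidden-region bound gives $h_{\nu_{j_0}}(x_{j_0})^2\lesssim e^{-c x_{j_0}^2}\le e^{-c|x|_\infty^2}$, uniformly in $\nu_{j_0}\le k$. Bounding the remaining $d_1-1$ factors by $\|h_\ell\|_\infty^2\lesssim 1$ and using that the number of $\nu$ with $|\nu|_1=k$ is $\binom{k+d_1-1}{d_1-1}\lesssim (k+1)^{d_1-1}$, one gets $H_k(x)\lesssim (k+1)^{d_1-1} e^{-c|x|_\infty^2}$; since $(k+1)^{d_1-1}\lesssim e^{(c/2)[k]}\le e^{(c/4)|x|_\infty^2}$ in this regime, the polynomial factor is absorbed into a slightly smaller Gaussian, giving the claimed $C\exp(-c|x|_\infty^2)$ after relabeling $c$. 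Restoring the $|\eta|$ dependence by the scaling $H_k^\eta(x)=|\eta|^{d_1/2}H_k(|\eta|^{1/2}x)$ and noting $|\eta|^{1/2}x$ has $|\cdot|_\infty^2 = |\eta||x|_\infty^2\ge 2[k]$ exactly under the stated hypothesis completes the argument.

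The main obstacle I anticipate is the first estimate: getting the \emph{sharp} exponent $d_1/2-1$ rather than the easy but lossy $d_1-1$. This requires genuinely using the one-dimensional square-function bound $\sum_{\ell\le m}h_\ell(u)^2\lesssim m^{1/2}$ \emph{in every coordinate simultaneously}, and carefully tracking how the constraint $\nu_1+\dots+\nu_{d_1}=k$ limits the product of the individual gains — a convolution/Young-type estimate on the sequences $(\ell\mapsto \mathbf 1_{\ell\le m}\cdot m^{1/2})$ — so that the total comes out as $k^{d_1/2-1}$ and not $k^{(d_1-1)/2}\cdot$(count). All the one-dimensional Hermite asymptotics I would simply cite from \cite{Th93}; the only real work is this combinatorial/summation step, which for $d_1\ge 2$ works out cleanly (and is exactly why the hypothesis $d_1\ge 2$ appears).
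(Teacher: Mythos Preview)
The paper does not prove this lemma; it simply cites \cite[Lemma~8]{MaSi12}. Your sketch is therefore already more than what the paper provides, and your scaling reduction to $|\eta|=1$ together with the forbidden-region argument for the second (Gaussian) bound is correct and standard.

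The gap is in your route to the first bound $H_k(x)\lesssim[k]^{d_1/2-1}$. The one-dimensional input $\sum_{\ell\le m}h_\ell(u)^2\lesssim m^{1/2}$ is \emph{not} by itself enough to control the convolution $\sum_{|\nu|_1=k}\prod_j h_{\nu_j}(x_j)^2$ sharply: if nonnegative sequences $c_m$ satisfy only $\sum_{m\le M}c_m\le M^{1/2}$, the two-fold convolution $(c*c)(k)$ can still be of order $k$ (take $c_m=M^{1/2}\mathbf 1_{\{m=M\}}$ and evaluate at $k=2M$), so no ``Young-type'' estimate based on partial sums alone yields the required $O(1)$ bound in the case $d_1=2$. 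The proofs behind \cite{MaSi12} (ultimately Thangavelu \cite{Th93}) instead use the finer three-region pointwise asymptotics for $h_\ell$ --- the $(2\ell+1)^{-1/4}$ decay in the oscillatory bulk, the $(2\ell+1)^{-1/12}$ Airy bound near the turning point, and exponential decay beyond it --- and sum the contributions region by region. Alternatively, one can observe that the first bound is equivalent, via $\|P_k\|_{1\to\infty}=\sup_x H_k(x)=\|P_k\|_{1\to 2}^2$, to the $p=1$ case of the restriction estimate in \cref{thm:discrete-rest}, and cite that. Your references are the right ones, but the mechanism you describe would not close the argument.
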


\begin{proof}
See \cite[Lemma 8]{MaSi12} and the references therein.
\end{proof}

Now we state the restriction type estimates of the Grushin operator $L$. The new feature in comparison to \cite{ChOu16} is the truncation along the spectrum of $T$ instead of employing weights in the restriction type estimates. Let $\varrho$ denote again the Carnot--Carathéodory distance associated to $L$.

\begin{theorem}\label{thm:rest}
Let $1\le p\le p_{d_1,d_2}$. Suppose that $F:\R\to\C$ is a bounded Borel function supported in $[1/8,8]$. For $\ell\in\N$, let $G_\ell : \R\times \R \to \C$ be given by
\[
G_\ell(\lambda,r) = F(\sqrt \lambda)\chi_\ell(\lambda/r) \quad \text{for }r\neq 0
\]
and $G_\ell(\lambda,r) = 0$ else, where $\chi_\ell$ is defined via \cref{eq:dyadic}. Then
\begin{align}\label{eq:rest-1}
\norm{ G_\ell(L,T) }_{p\to 2}
&\le C_{p} 2^{-\ell d_2(1/p -1/2)} \norm{F}_2.
\end{align}
In particular, for $\iota\in\N$, 
\begin{equation}\label{eq:rest-2}
\bigg\Vert \sum_{\ell > \iota } G_\ell(L,T) \bigg\Vert_{p\to 2} \le C_{p} 2^{-\iota d_2(1/p -1/2)} \norm{F}_2.
\end{equation}
Moreover, for $(a,b)\in \R^{d_1}\times\R^{d_2}$ and $0<R<|a|/4$,
\begin{equation}\label{eq:rest-3}
\norm{F( \sqrt L) \chi_{B_R^\varrho(a,b)}}_{p\to 2}
\le C_p |a|^{-d_2(1/ p -  1/ 2)} \norm{F}_2.
\end{equation}
\end{theorem}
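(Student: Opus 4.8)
The plan is to prove the three estimates in order, deriving \cref{eq:rest-2} and \cref{eq:rest-3} from the fundamental bound \cref{eq:rest-1}. For \cref{eq:rest-1}, I would use \cref{lem:func-calc} to write $G_\ell(L,T)f$ fiberwise as $G_\ell(L^\eta,|\eta|)f^\eta = F(\sqrt{L^\eta})\chi_\ell(L^\eta/|\eta|)f^\eta$. On the fiber $\eta$ the operator $\chi_\ell(L^\eta/|\eta|)$ is just the spectral projection onto eigenvalues $[k]|\eta|$ with $[k]\sim 2^\ell$, i.e.\ a finite sum $\sum_{[k]\sim 2^\ell} P_k^\eta$, and on the range of each $P_k^\eta$ the factor $F(\sqrt{L^\eta})$ acts as the scalar $F(\sqrt{[k]|\eta|})$. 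So $\norm{G_\ell(L,T)f^\eta}_{L^2(\R^{d_1})}^2 = \sum_{[k]\sim 2^\ell} |F(\sqrt{[k]|\eta|})|^2 \norm{P_k^\eta f^\eta}_{L^2(\R^{d_1})}^2$. The first move is to bound $\norm{P_k^\eta f^\eta}_2$ using the discrete restriction estimate \cref{thm:discrete-rest} in the $x$-variable, producing a factor $|\eta|^{\frac{d_1}{2}(1/p-1/2)}[k]^{\frac{d_1}{2}(1/p-1/2)-1/2}$ times $\norm{f^\eta}_{L^p(\R^{d_1})}$. The key point is then that this happens uniformly in $\eta$ with the \emph{right} power of $|\eta|$, so that after taking an $L^{p'}$-$L^p$ type duality / interpolation step in the $\eta$-variable one can apply the classical Stein--Tomas restriction estimate on $\R^{d_2}$ (valid since $p \le 2(d_2+1)/(d_2+3)$), which is what converts $\norm{f^\eta}_{L^p_x}$ integrated against the measure $|\eta|^{d_1(1/p-1/2)}\,d\eta$ (localized to $|\eta|\sim [k]^{-1}\lambda$ where $\lambda\in\supp F^2\subseteq[1/64,64]$) back into $\norm{f}_{L^p(\R^d)}$. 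Summing the geometric-type series in $k$ over $[k]\sim 2^\ell$, the powers of $2^\ell$ must be tallied carefully: one collects $2^{\ell}$ from the cardinality of $\{k:[k]\sim 2^\ell\}$, $2^{\ell(d_1(1/p-1/2)-1)}$ from the discrete restriction estimate squared, and a factor $2^{-\ell d_1(1/p-1/2)}$ coming from the restriction of $|\eta|$ to the shell $|\eta|\sim 2^{-\ell}$ (this is where the weight $|\eta|^{d_1(1/p-1/2)}$ is absorbed); together these should leave exactly $2^{-\ell d_2(1/p-1/2)}\norm{F}_2^2$ after also accounting for the $d_2$-dimensional Stein--Tomas constant. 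I expect the bookkeeping of these exponents, and making the $\eta$-integration argument rigorous (it is really an $L^p(\R^{d_1}, L^2_{\text{disc}})\to$ mixed-norm estimate followed by vector-valued Stein--Tomas), to be the main technical obstacle.

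For \cref{eq:rest-2}, since the $G_\ell(L,T)$ for distinct $\ell$ have orthogonal ranges (the projections $\chi_\ell(L/T)$ are mutually orthogonal), the operators $G_\ell(L,T)^*G_{\ell'}(L,T)$ vanish for $\ell\neq\ell'$, so $\norm{\sum_{\ell>\iota}G_\ell(L,T)f}_2^2 = \sum_{\ell>\iota}\norm{G_\ell(L,T)f}_2^2$; applying \cref{eq:rest-1} to each term and summing the geometric series $\sum_{\ell>\iota}2^{-2\ell d_2(1/p-1/2)} \lesssim 2^{-2\iota d_2(1/p-1/2)}$ gives \cref{eq:rest-2} (here $F$ supported in $[1/8,8]$ forces $\chi_\ell$ to be nonzero only for a range of $\ell$, but the estimate is stated for the full tail sum, which is fine).

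For \cref{eq:rest-3}, the idea is to combine \cref{eq:rest-2} with finite propagation speed and the geometry from \cref{prop:cc-distance}. First decompose $F(\sqrt L) = \sum_\ell G_\ell(L,T)$ and split the sum at a threshold $\iota$ to be chosen depending on $R$ and $|a|$. For the low-frequency part $\sum_{\ell\le\iota}G_\ell(L,T)$, one uses that on the range of $\chi_\ell(L/T)$ one has $r\sim[k]^{-1}\sim 2^{-\ell}$, so $F(\sqrt L)\chi_\ell(L/T) = F([k]^{1/2}\sqrt T \cdot \text{(something)})$ and the effective operator is essentially a function of $T=(-\Delta_y)^{1/2}$ alone localized at frequency $\sim 2^{-\ell}$ in $y$; but crucially, on $B_R^\varrho(a,b)$ with $R<|a|/4$ the metric is \emph{anisotropic and Euclidean-like at scale $R$ in $x$ and scale $R|a|$ in $y$} (from \cref{prop:cc-distance}(1), since there $|x|\sim|a|$, we have $\varrho\sim|x-a| + |y-b|/|a|$), so one exploits that a spectrally localized piece has kernel decaying off a tube of these dimensions — one uses finite propagation speed to localize $F(\sqrt L)\chi_{B_R^\varrho(a,b)}$ and then a Sobolev-type $p\to 2$ bound on a region of volume $\sim R^{d_1}(R|a|)^{d_2}$, which is comparable to $|B^\varrho_{?}|$ and yields a factor $R^{?}|a|^{?}$; balancing this against the $2^{-\iota d_2(1/p-1/2)}$ from \cref{eq:rest-2} for the high-frequency tail, and choosing $2^{-\iota}\sim R/|a|$ or similar, should produce exactly $|a|^{-d_2(1/p-1/2)}$. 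The delicate part here is extracting the correct $|a|$-power: since $R<|a|/4$ the ball $B_R^\varrho(a,b)$ is "small" and lives in the elliptic regime, so morally $F(\sqrt L)\chi_{B_R^\varrho(a,b)}$ behaves like a Euclidean restriction operator in $\R^{d_1+d_2}$ rescaled anisotropically, and I expect identifying the precise scaling — using \cref{prop:cc-distance}(4) to reduce to $|a|=1$, then \cref{prop:cc-distance}(3) to sit inside a Euclidean product ball, and finally a standard Stein--Tomas argument there — to be the main obstacle, together with verifying that the two pieces of the frequency decomposition genuinely combine to the stated clean bound independent of $R$.
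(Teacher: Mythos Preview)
Your plan for \cref{eq:rest-1} is essentially the paper's argument: fiberwise decomposition via \cref{lem:func-calc}, discrete restriction (\cref{thm:discrete-rest}) in $x$, Minkowski to swap norms, then Stein--Tomas in $y$. The bookkeeping you worry about is straightforward once one notes that $[k]|\eta|\sim 1$ on the support of $F$, so the factor $|\eta|^{\frac{d_1}{2}(1/p-1/2)}[k]^{\frac{d_1}{2}(1/p-1/2)}$ collapses to a constant and only $[k]^{-1/2}$ survives from the discrete restriction; the Stein--Tomas step then contributes $[k]^{-d_2(1/p-1/2)}\|F\|_2$, and summing $[k]^{-1}$ over $[k]\sim 2^\ell$ gives the claim.

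For \cref{eq:rest-2} there is a small slip: the $\chi_\ell$ from \cref{eq:dyadic} are smooth and have overlapping supports (adjacent scales overlap), so the $\chi_\ell(L/T)$ are \emph{not} orthogonal projections and $G_\ell(L,T)^*G_{\ell'}(L,T)$ need not vanish for $|\ell-\ell'|=1$. The paper simply uses the triangle inequality $\|\sum_{\ell>\iota}G_\ell(L,T)\|_{p\to 2}\le\sum_{\ell>\iota}\|G_\ell(L,T)\|_{p\to 2}$ and sums the geometric series; your argument is easily repaired this way.

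The real gap is in your plan for \cref{eq:rest-3}. The paper does \emph{not} use finite propagation speed or a scaling/Sobolev argument for the small-$[k]$ regime. Instead it splits the sum over $k$ at the threshold $[k]\sim\gamma|a|$ (which does correspond to your $2^\iota\sim|a|$). For $[k]\gtrsim|a|$ the earlier estimate \cref{eq:prf-rest-4} already gives the right decay. For $[k]<\gamma|a|$ the key mechanism---which your proposal does not identify---is the \emph{pointwise exponential decay of Hermite functions} from \cref{thm:hermite}: since $f$ is supported in $B_R^\varrho(a,b)\subseteq B_R^{|\cdot|}(a)\times\R^{d_2}$ with $R<|a|/4$, and since $[k]|\eta|\sim 1$ forces $|\eta||x|^2\gtrsim |a|^2/[k]\gg[k]$ for $x\in B_R(a)$, the kernel $\mathcal K_k^\eta(x,\cdot)$ restricted to $B_R(a)$ is bounded by $|\eta|^{d_1/2}\exp(-c|\eta||a|^2)\lesssim_N|a|^{-N}$. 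One then replaces the discrete restriction step by H\"older against this exponentially small kernel, still applies Stein--Tomas in $y$, and sums over $[k]<\gamma|a|$ with arbitrary polynomial gain in $|a|^{-1}$. Your proposed ``Sobolev-type $p\to 2$ bound on a region of volume $\sim R^{d_1}(R|a|)^{d_2}$'' would at best produce a bound growing with the volume, not the needed decay in $|a|$, and finite propagation speed does not help here since the multiplier $F$ is not compactly Fourier-supported. Without the Hermite decay (or an equivalent substitute) the low-$[k]$ piece cannot be controlled, and the balancing you describe does not close.
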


\begin{remark}
By \cref{lem:func-calc}, we have
\[
(G_\ell(L,T) f)^\eta = \sum_{k=0}^\infty F(\sqrt{[k]|\eta|}) \chi_\ell ([k] ) P_k^\eta f^\eta
\]
for almost all $\eta\in\R^{d_2}$. Note that $d_1\ge 2$ due to the assumption on the range of $p$. Thus $\chi_j([k])=0$ for all $j\le 0$ and $k\in\N$, whence
\[
\sum_{\ell=1}^\infty G_\ell(L,T) f = F(\sqrt L)f.
\]
\end{remark}

\begin{proof}
We first prove \cref{eq:rest-1}. Note that \cref{eq:rest-2} is a direct consequence of \cref{eq:rest-1} since
\[
\bigg\Vert \sum_{\ell> \iota} G_\ell(L,T) \bigg\Vert_{p\to 2}
 \le \sum_{\ell> \iota} \Vert G_\ell(L,T)\Vert_{p\to 2}.
\]
Let $f\in \S(\R^d)$. In the following, let $g_k^\eta := F( \sqrt{[k]|\eta|}) f^\eta$ for $\eta\in\R^{d_2}$ and $k\in\N$. Using Plancherel's theorem, \cref{lem:func-calc}, and orthogonality in $L^2(\R^{d_1})$, we obtain
\begin{align}
\norm{ G_\ell(L,T) f }^2_{L^2(\R^d)}
& \sim \norm{ G_\ell(L^\eta,|\eta|) f^\eta } ^2_{L^2(\R^{d_1}\times\R^{d_2}_{\eta})} \notag \\
& = \bigg \Vert \sum_{k=0}^\infty F(\sqrt{[k]|\eta|}) \chi_\ell ([k] ) P_k^\eta f^\eta \bigg\Vert^2_{L^2(\R^{d_1}\times\R^{d_2}_{\eta})} \notag \\
& = \sum_{k=0}^\infty \chi_\ell([k])^2 \norm{ P_k^\eta g_k^\eta }^2_{L^2(\R^{d_1}\times\R^{d_2}_{\eta})}. \label{eq:prf-rest-1}
\end{align}
The restriction type estimate of \cref{thm:discrete-rest} provides the estimate
\begin{align}
\norm{ P_k^\eta g_k^\eta }_{L^2(\R^{d_1})}
& \lesssim |\eta|^{\frac{d_1}{2}( 1/p - 1/2)} [k]^{\frac {d_1} 2( 1/p - 1/2) -1/2} \norm{ g_k^\eta }_{L^p(\R^{d_1})} \notag \\
& \sim [k]^{-1/2} \norm{ g_k^\eta}_{L^p(\R^{d_1})}
\label{eq:appl-discrete-restriction}
\end{align}
since $[k] |\eta|\sim 1$ whenever $[k] |\eta|\in \supp F$. Moreover, Minkowski's integral inequality yields
\begin{equation}\label{eq:prf-rest-minkowski}
\norm{\norm{g_k^\eta}_{L^p(\R^{d_1})}}_{L^2(\R^{d_2}_{\eta})}
 \le \norm{\norm{g_k^\eta(x)}_{L^2(\R_\eta^{d_2})}}_{L^p(\R^{d_1}_{x})}.
\end{equation}
Let $f_{x}:=f(x,\cdot)$ and $\widehat\cdot$ denote the Fourier transform on $\R^{d_2}$. Using polar coordinates and applying the classical Stein--Tomas restriction estimate \cite{To79} yields
\begin{align}
\norm{ g_k^\eta(x) }_{L^2(\R_\eta^{d_2})}^2
 & = \int_0^\infty \int_{S^{d_2-1}} | F(\sqrt{[k]r}) \widehat{f_{x}}(r\omega)|^2 r^{d_2-1} \, d\sigma(\omega) \, dr\notag\\
 & = \int_0^\infty | F(\sqrt{[k]r})|^2 r^{-d_2-1}\int_{S^{d_2-1}} \big| \big(f_{x}(r^{-1}\,\cdot\,)\big)^\wedge(\omega)\big|^2 \, d\sigma(\omega) \, dr \notag \\
 & \lesssim \int_0^\infty | F(\sqrt{[k]r})|^2  r^{-d_2-1} \norm{f_{x}(r^{-1}\,\cdot\,)}_{L^p(\R^{d_2})}^2 \, dr\notag \\
 & = \int_0^\infty |F(\sqrt{[k]r})|^2 r^{2d_2(1/p -1/2)-1}  \, dr \, \norm{f_{x}}_{L^p(\R^{d_2})}^2\notag \\
 & \sim [k]^{-2d_2(1/p -1/2)} \int_0^\infty |F(\sqrt{r})|^2 \,dr \, \norm{f_{x}}_{L^p(\R^{d_2})}^2.\notag 
\end{align}
Substituting $r\mapsto r^2$, we obtain, together with \cref{eq:prf-rest-minkowski},
\begin{equation}\label{eq:prf-rest-5}
\norm{\norm{g_k^\eta}_{L^p(\R^{d_1})}}_{L^2(\R^{d_2}_{\eta})}
 \lesssim [k]^{-d_2(1/p -1/2)} \norm{F}_2 \norm{f}_p.
\end{equation}
Together with \cref{eq:appl-discrete-restriction}, we get
\begin{equation}\label{eq:prf-rest-4}
\norm{ P_k^\eta g_k^\eta  }_{L^2(\R^{d_1}\times\R^{d_2}_{\eta})}
\lesssim [k]^{-d_2(1/p -1/2)-1/2} \norm{F}_2 \norm{f}_p.
\end{equation}
Hence, in conjunction with \cref{eq:prf-rest-1}, we finally get
\begin{align*}
\norm{ G_\ell(L,T) f }^2_{L^2(\R^d)}
& \lesssim \sum_{[k]\sim 2^\ell} [k]^{-2d_2(1/p -1/2)-1} \norm{F}_2^2 \norm{f}_p^2 \\
& \lesssim 2^{-2\ell d_2(1/p -1/2)} \norm{F}_2^2 \norm{f}_p^2.
\end{align*}
This proves \cref{eq:rest-1}.

\medskip

Now we prove \cref{eq:rest-3}. Suppose that $f$ is supported in $B_R^\varrho(a,b)$. Applying \cref{eq:rest-2} for $\iota = 0$, we obtain
\[
\norm{F(\sqrt L) f}_2
\lesssim \norm{F}_2 \norm{f}_p.
\]
Hence we can assume $|a|>1$ without loss of generality. As before, let $g_k^\eta = F(\sqrt{[k] |\eta|}) f^\eta$. The same arguments as in \cref{eq:prf-rest-1} show that
\[
\Vert F(\sqrt L) f \Vert ^2_{L^2(\R^d)}
\sim \sum_{k=0}^\infty \Vert P_k^\eta g_k^\eta  \Vert^2_{L^2(\R^{d_1}\times\R^{d_2}_{\eta})}.
\]
We split the sum over $k$ in two parts, one part where $[k] \ge \gamma |a|$, and another part where $[k] < \gamma |a|$. The constant $\gamma>0$ will be chosen later sufficiently small.

\medskip

\textit{Case 1:} For those $k\in\N$ satisfying $[k] \ge \gamma |a|$, we use estimate \cref{eq:prf-rest-4} from before, and we are done since
\[
\sum_{[k]\ge \gamma |a|}[k]^{-2d_2(1/p -1/2)-1}
 \lesssim_\gamma |a|^{-2d_2(1/p -1/2)}.
\]

\medskip

\textit{Case 2:} For $[k] < \gamma |a|$, we replace the restriction type estimate of \cref{thm:discrete-rest} by an estimation that uses Hölder's inequality and the pointwise estimates for Hermite functions provided by \cref{thm:hermite}. (Note that we have assumed $d_1\ge 2$ by choosing $1\le p\le p_{d_1,d_2}$.) For the component $y\in\R^{d_2}$, we use the Stein--Tomas restriction estimate in the same way as before.

Fix $k\in\N$ with $[k] < \gamma |a|$. By \cref{prop:cc-distance} (3), $g_k^\eta$ is supported in $B_R^{|\,\cdot\,|}(a)$ since $f$ is supported in $B_R^\varrho(a,b)$.
Recall that the projection $P_k^\eta$ onto the eigenspace associated to the eigenvalue $[k]|\eta|$ possesses the integral kernel $\mathcal K_k^\eta$ given by \cref{eq:integral-kernel}. Using Hölder's inequality, we obtain
\begin{align*}
|P_k^\eta g_k^\eta (x)|
& \le \norm{ \mathcal K_k^\eta (x,\cdot) }_{L^{p'}(B_R^{|\,\cdot\,|}(a))}\norm{g_k^\eta}_{L^p(\R^{d_1})}, 
\end{align*}
where $p'$ is the dual exponent of $p$. Hence
\begin{equation}
\norm{ P_k^\eta g_k^\eta }_{L^2(\R^{d_1}\times\R_{\eta}^{d_2})}
\le \big \Vert  \underbrace{\norm{ \norm{ \mathcal K_k^\eta (x,\cdot) }_{L^{p'}(B_R^{|\,\cdot\,|}(a))} }_{L^2(\R_{x}^{d_1})} }_{=:\beta_{k,\eta}} \norm{g_k^\eta}_{L^p(\R^{d_1})} \big\Vert_{L^2(\R_\eta^{d_2})}.\label{eq:prf-rest-hoelder}
\end{equation}
Since $|\mathcal K_k^\eta (x,\xi)|\le H_k^\eta (x)^{1/2} H_k^\eta (\xi)^{1/2}$ (with $H_k^\eta$ being defined as in \cref{eq:Hermite-kernel}), we get
\begin{align}
\beta_{k,\eta}
& \le  \norm{ (H_k^\eta)^{1/2} }_{L^{2}(\R^{d_1})} \norm{ (H_k^\eta)^{1/2} }_{L^{p'}(B_R^{|\,\cdot\,|}(a))}.\label{eq:prf-rest-hermite}
\end{align}
The first factor can be estimated by
\begin{align}
\norm{ (H_k^\eta)^{1/2} }_{L^{2}(\R^{d_1})}
& = \bigg( \sum_{|\nu|_1=k} \Vert \Phi_\nu^\eta \Vert_2^2 \bigg)^{1/2}\notag \\
& = |\{\nu \in\N^{d_1} :|\nu|_1=k \}|^{1/2} \le k^{d_1/2}.\label{eq:hermite-kernel-L2}
\end{align}
Let $x\in B_R(a)$. Since $P_k^\eta g_k^\eta=0$ for $[k]|\eta|\notin \supp F$, we may assume $[k]|\eta| \sim 1$. Thus, since $R<|a|/4$, we have
\[
|\eta||x|_\infty^2
 \sim |\eta||x|^2
 \gtrsim |\eta| |a|^2  
 \ge \frac{|\eta|[k]^2 }{\gamma^2}
 \sim \frac{[k]}{\gamma^2}.
\]
Choosing $\gamma>0$ small enough absorbs all constants, so that $|\eta| |x|_\infty^2 \ge 2[k]$. Thus, together with \cref{thm:hermite}, we obtain
\begin{align}
\norm{ (H_k^\eta)^{1/2} }_{L^{p'}(B_R^{|\,\cdot\,|}(a))}
& \lesssim \big\Vert |\eta|^{d_1/4} \exp(- c |\eta||\cdot|^{2}) \big\Vert_{L^{p'}(B_R^{|\,\cdot\,|}(a))} \notag \\
& \le |\eta|^{d_1/4} \exp(-\tilde c |\eta| |a|^{2})  |B_R^{|\,\cdot\,|}(a)|^{1/p'}. \notag
\end{align}
Recall that we have assumed $|a|>1$, whence
\[
|\eta|^{d_1/4}|B_R^{|\,\cdot\,|}(a)|^{1/p'}\lesssim (|\eta||a|^2)^{d_1/4}.
\]
Moreover, since $[k]|\eta|\sim 1$ and $[k]< \gamma|a|$, we have $|\eta||a|\gtrsim 1/\gamma$. Hence
\begin{align}
\norm{ (H_k^\eta)^{1/2} }_{L^{p'}(B_r(a))}
& \lesssim_{N} (|\eta| |a|^{2})^{-N} 
  \lesssim_{N,\gamma} |a|^{-N}  \label{eq:prf-rest-8}
\end{align}
for any $N\in\N$. Gathering the estimates \cref{eq:prf-rest-hermite}, \cref{eq:hermite-kernel-L2}, \cref{eq:prf-rest-8} yields
\begin{equation}\label{eq:beta}
\beta_{k,\eta} \lesssim_{N,\gamma} [k]^{d_1/2} |a|^{-N}.
\end{equation}
Furthermore, recall that Minkowski's integral inequality and the Stein--Tomas restriction estimate gave us \cref{eq:prf-rest-5}, which yields in particular
\begin{equation} \label{eq:prf-rest-7}
\norm{\norm{g_k^\eta}_{L^p(\R^{d_1})}}_{L^2(\R^{d_2}_{\eta})}
 \lesssim \norm{F}_2 \norm{f}_p.
\end{equation}
Altogether, \cref{eq:prf-rest-hoelder}, \cref{eq:beta} and \cref{eq:prf-rest-7} provide
\begin{align*}
\norm{ P_k^\eta g_k^\eta }_{L^2(\R^{d_1}\times\R_{\eta}^{d_2})}
& \le \big \Vert \beta_{k,\eta} \norm{g_k^\eta}_{L^p(\R^{d_1})}  \big\Vert_{L^2(\R_\eta^{d_2})}  \\
& \lesssim_{N,\gamma} [k]^{d_1/2} |a|^{-N} \norm{F}_2 \norm{f}_p.
\end{align*}
Finally, by choosing $N\in\N$ large enough, we obtain
\begin{align*}
\sum_{[k]< \gamma |a|} \norm{ P_k^\eta g_k^\eta }^2_{L^2(\R^{d_1}\times\R_{\eta}^{d_2})}
& \lesssim_{N,\gamma} \sum_{[k]< \gamma |a|} [k]^{d_1} |a|^{-2N} \norm{F}_2^2 \norm{f}_p^2 \\
& \lesssim_\gamma |a|^{-2d_2(1/p- 1/2)} \norm{F}_2^2 \norm{f}_p^2.
\end{align*}
This finishes the proof.
\end{proof}

\section{Proofs of \texorpdfstring{\cref{thm:Lp-multiplier}}{Theorem 1.1} and \texorpdfstring{\cref{thm:riesz-means}}{Theorem 1.2}}
\label{sec:proof}

Let again $\varrho$ denote the Carnot--Carathéodory distance associated to the Grushin operator $L$, let $d=d_1+d_2$ be the topological dimension, and $Q=d_1+2d_2$ be the homogeneous dimension of the metric measure space $(\R^d,\varrho,|\cdot|)$. Moreover, let $p_{d_1,d_2}$ be defined as in \cref{eq:p}. Given any bounded Borel function $G:\R\to \C$, let
\[
G^{(j)} := (\hat G \chi_j)^\vee \quad \text{for } j\in\Z,
\]
where $\chi_j$ is defined by \cref{eq:dyadic}.

We will use the following result of \cite[Proposition I.22]{ChOuSiYa16}, which we record here in a slightly modified version, see the remark below. The proof of the result in \cite{ChOuSiYa16} relies on standard Calderón-Zygmund theory arguments.

\begin{proposition}\label{prop:weak-multiplier}
Let $L$ be a non-negative self-adjoint operator on a metric measure space $(X,d,\mu)$ of homogeneous type with homogeneous dimension $Q$. Let $1\le p_0 < p <2$. Suppose that $L$ satisfies the following properties:
\begin{enumerate}
    \item $L$ satisfies the finite propagation speed property.
    \item For all $t>0$ and all bounded Borel functions $F:\R\to \C$ supported in $[0,1]$,
    \begin{equation}\label{eq:stein-tomas-cond}
        \norm{F(t\sqrt L) \chi_{B_R} }_{p_0\to 2} \le C_{p_0} \Big( \frac{(R/t)^Q}{\mu(B_R)} \Big)^{1/p_0-1/2} \norm{F}_{\infty} .
    \end{equation}
    for all balls $B_R\subseteq X$ of radius $R>t$.
\end{enumerate}
Then for any $s>1/2$ and every bounded Borel function $F:\R\to \C$ satisfying $\norm{F}_{\sloc,s}<\infty$ and
\begin{equation}\label{eq:cond-1}
\norm{(F\chi_i)^{(j)}(\sqrt L)}_{p\to p} \le \alpha(i+j) \norm{F}_{\sloc,s} \quad\text{for all } i,j\in \Z,
\end{equation}
with $\sum_{\iota \ge 1}\iota \alpha(\iota)\le C_{p,s}$, the operator $F(\sqrt L)$ is bounded on $L^p$, and
\begin{equation}\label{eq:weak-type-estimate}
\norm{F(\sqrt L)}_{p\to p} \le C_{p,s} \norm{F}_{\sloc,s}.
\end{equation}
\end{proposition}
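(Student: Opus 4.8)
The plan is to bound $F(\sqrt L)$ on $L^p$ by separating it into a \emph{local} part, which the Calderón--Zygmund machinery turns into a weak-type $(p_0,p_0)$ estimate by feeding on the restriction-type bound \eqref{eq:stein-tomas-cond}, and a \emph{global} part, which is reassembled from the pieces already controlled by hypothesis \eqref{eq:cond-1}. Two elementary reductions come first. Since $s>1/2$, the embedding $L^2_s(\R)\hookrightarrow L^\infty(\R)$ applied to the rescaled dyadic blocks of $F$ gives $\norm{F}_\infty\lesssim\norm{F}_{\sloc,s}$, so $F(\sqrt L)$ is bounded on $L^2$ with $\norm{F(\sqrt L)}_{2\to2}\lesssim\norm{F}_{\sloc,s}$; by Marcinkiewicz interpolation it then suffices to prove a weak-type $(p_0,p_0)$ bound for the local part and an $L^p$ bound for the global part. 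Without loss of generality $F$ is supported in $(0,\infty)$, so $F=\sum_{i\in\Z}F\chi_i$ with $\chi_i$ as in \eqref{eq:dyadic}, and decomposing each block along the wave variable, $F\chi_i=\sum_{j\in\Z}(F\chi_i)^{(j)}$, the finite propagation speed property (hypothesis (1)) shows that the integral kernel of $(F\chi_i)^{(j)}(\sqrt L)$ is supported in $\{(x,y):d(x,y)\le C2^j\}$. I then split $F(\sqrt L)=\sum_{\iota\le 0}T^\iota+\sum_{\iota\ge 1}T^\iota$, where $T^\iota:=\sum_{i\in\Z}(F\chi_i)^{(\iota-i)}(\sqrt L)$ gathers the pieces whose propagation scale $2^{\iota-i}$ is $2^\iota$ times the natural scale $2^{-i}$ attached to the spectral localization $\sqrt L\sim 2^i$; the first sum is the local part and the second the global part.

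For the \emph{global} part, \eqref{eq:cond-1} gives $\norm{(F\chi_i)^{(\iota-i)}(\sqrt L)}_{p\to p}\le\alpha(\iota)\norm{F}_{\sloc,s}$ for every summand of $T^\iota$. Since these summands are spectrally localized in the intervals $[2^{i-1},2^{i+1}]$, they are pairwise almost orthogonal (only boundedly many overlap, and after grouping the indices $i$ into a few residue classes the remaining ones have pairwise disjoint spectral supports); combining this with the $L^p$-bounded Littlewood--Paley decomposition associated with $L$ that is available in this setting (from the Davies--Gaffney/heat-kernel bounds accompanying finite propagation speed), one collapses the sum over $i$ at the cost of a factor at most linear in $\iota$, obtaining $\norm{T^\iota}_{p\to p}\lesssim\iota\,\alpha(\iota)\norm{F}_{\sloc,s}$. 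Summing over $\iota\ge1$ and invoking $\sum_{\iota\ge1}\iota\,\alpha(\iota)\le C_{p,s}$ yields $\norm*{\sum_{\iota\ge1}T^\iota}_{p\to p}\lesssim C_{p,s}\norm{F}_{\sloc,s}$.

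For the \emph{local} part $\sum_{\iota\le 0}T^\iota=\sum_{i\in\Z}B_i$, with $B_i:=\sum_{j\le -i}(F\chi_i)^{(j)}(\sqrt L)=\Psi_i(\sqrt L)$ and $\widehat{\Psi_i}=\widehat{F\chi_i}\,\psi_{-i}$ a smooth truncation of $\widehat{F\chi_i}$ to $\{|t|\lesssim 2^{-i}\}$, the finite propagation speed property gives that $\Psi_i(\sqrt L)$ has integral kernel supported in a $2^{-i}$-neighbourhood of the diagonal, and $\norm{\Psi_i(\sqrt L)}_{2\to2}\lesssim\norm{F\chi_i}_\infty$. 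This is the configuration handled by the standard Calderón--Zygmund argument of Blunck--Kunstmann / Duong--Ouhabaz--Sikora type: combining the kernel localization, the $L^2$-bound, the restriction-type estimate \eqref{eq:stein-tomas-cond} applied with $t\sim 2^{-i}$ on balls of radius $R\sim C2^{-i}$ (which bounds $\norm{\Psi_i(\sqrt L)\chi_{B_R}}_{p_0\to2}$ by $\mu(B_R)^{-(1/p_0-1/2)}$ times the $\sloc$-strength of $F\chi_i$), and the doubling property of $(X,d,\mu)$, one verifies the Hörmander integral condition for $\sum_i B_i$ and concludes that $\sum_i B_i$ is of weak type $(p_0,p_0)$ with bound $\lesssim\norm{F}_{\sloc,s}$; here $s>1/2$ is exactly what is needed to sum the tails of $\widehat{F\chi_i}$ over the annuli $\{|t|\sim 2^{-i+m}\}$, $m\ge 0$, into a convergent series. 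Interpolating this weak-type bound with the $L^2$-bound for $\sum_i B_i$ gives $L^p$-boundedness for $p_0<p<2$, and together with the global part this establishes \eqref{eq:weak-type-estimate}.

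The main obstacle is the local part, namely the Calderón--Zygmund step that converts the single-ball restriction hypothesis \eqref{eq:stein-tomas-cond} into the full Hörmander integral condition for the sum $\sum_i B_i$, uniformly in $i$ and with constant $\lesssim\norm{F}_{\sloc,s}$; this is where the interplay between the doubling dimension $Q$, the localization scale $2^{-i}$, and the regularity $s>1/2$ must be controlled, and it is precisely the "standard Calderón--Zygmund arguments" alluded to above, carried out in detail in \cite{ChOuSiYa16}. The reassembly of the global part is more routine once the spectral almost-orthogonality and the Littlewood--Paley bound for $L$ are in place; isolating it is what allows hypothesis \eqref{eq:cond-1}, rather than the crude estimate \eqref{eq:stein-tomas-cond}, to carry the sharp, dimension-dependent decay that is needed in the applications to the Grushin operator.
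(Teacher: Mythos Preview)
The paper does not give a self-contained proof of this proposition; it records a version of \cite[Proposition~I.22]{ChOuSiYa16} and explains in the subsequent remark how to pass from that formulation to the present one (equivalence of \cref{eq:stein-tomas-cond} with the condition $(\mathrm{E}_{p_0,2})$ used there, upgrade from weak type $(p,p)$ to strong $L^p$ via interpolation, and the closed graph theorem for \cref{eq:weak-type-estimate}). The underlying proof in \cite{ChOuSiYa16} is a single Calder\'on--Zygmund argument producing weak type $(p,p)$: hypothesis \cref{eq:cond-1} is applied to the bad part of the decomposition, ball by ball, and the factor $\iota$ in the summability condition $\sum_{\iota\ge1}\iota\,\alpha(\iota)<\infty$ emerges from the interaction between the propagation scales $2^j$ and the radii of the Calder\'on--Zygmund balls, not from a Littlewood--Paley collapse over the spectral index $i$.

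Your local/global split is a reasonable reorganization, and the local part is in the right spirit, but the global part has a genuine gap. The operators $(F\chi_i)^{(\iota-i)}(\sqrt L)$ are \emph{not} spectrally supported in $[2^{i-1},2^{i+1}]$: the function $(F\chi_i)^{(\iota-i)}$ is the convolution of $F\chi_i$ with a Schwartz function of scale $2^{i-\iota}$ and thus has tails, so the claim of ``pairwise disjoint spectral supports after grouping into residue classes'' fails. Even granting spectral disjointness, an estimate of the form $\norm{\sum_i A_i}_{p\to p}\lesssim \iota\,\sup_i\norm{A_i}_{p\to p}$ does not follow from a square-function inequality, and the $L^p$ Littlewood--Paley theory you invoke is not guaranteed by finite propagation speed (equivalently, Davies--Gaffney $L^2$ off-diagonal bounds) alone; it generally requires stronger heat-kernel input that is not among the hypotheses here. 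The asserted linear factor $\iota$ is likewise left unexplained. This summation over $i$ is precisely the step that the Calder\'on--Zygmund machinery in \cite{ChOuSiYa16} is designed to absorb, and there does not appear to be a shortcut around it at the level of generality of \cref{prop:weak-multiplier}.
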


\begin{remark}
Proposition~I.22 of \cite{ChOuSiYa16} requires the condition $(\mathrm E_{p_0,2})$ in place of the Stein--Tomas restriction type condition \cref{eq:stein-tomas-cond}, which is however an equivalent property by Proposition~I.3 of the same paper. The additionally required condition (I.3.12) in \cite{ChOuSiYa16} is automatically fulfilled by Theorem I.5. Furthermore, in \cite{ChOuSiYa16} it is only stated that the operator $F(\sqrt L)$ is of weak type $(p,p)$, but $L^p$-boundedness can easily be recovered via interpolation, while the estimate \cref{eq:weak-type-estimate} follows by the closed graph theorem. The assumption $s>1/2$ in \cref{prop:weak-multiplier} ensures that $\norm{F}_\infty \lesssim \norm{F}_{\sloc,s}$.
\end{remark}

With \cref{prop:weak-multiplier} at hand, the proofs of \cref{thm:Lp-multiplier} and \cref{thm:riesz-means} boil down to proving the following statement.

\begin{proposition}\label{prop:dyadic-mult}
Let $1\le p\le p_{d_1,d_2}$ and $G:\R\to \C$ be an even bounded Borel function supported in $[-2,-1/2]\cup[1/2,2]$ such that $G\in L^2_s(\R)$ for some $s>d( 1/p - 1/2)$. Then there exists $\varepsilon>0$ such that
\[
\Vert G^{(\iota)}(\sqrt L) \Vert_{p\to p} \le C_{p,s} 2^{-\varepsilon\iota} \norm{G^{(\iota)}}_{L^2_s} \quad\text{for all } \iota\ge 0.
\]
\end{proposition}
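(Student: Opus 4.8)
The plan is to estimate $\|G^{(\iota)}(\sqrt L)\|_{p\to p}$ by a $TT^*$/atomic-decomposition argument, exploiting the finite propagation speed of $L$ together with the truncated restriction type estimates of \cref{thm:rest}. The key point is a dichotomy based on the location of the support of the relevant kernels relative to the plane $x=0$: where one is far from the degeneracy plane (the ``Euclidean regime''), the geometry is essentially that of $\R^d$ and one can afford to lose the factor $R^{d_2}$ in the volume; where one is close to the plane the truncation along the spectrum of $T$ saves the day. I would first reduce, via the dyadic decomposition and finite propagation speed (\cref{prop:cc-distance}(5)), to showing that $G^{(\iota)}(\sqrt L)$ has a kernel supported in a $\varrho$-neighbourhood of the diagonal of width $\sim 2^\iota$, and then use the standard fact that an $L^p\to L^p$ bound follows from an $L^p\to L^2$ bound on pieces localized to balls of radius $\sim 2^\iota$ together with the homogeneous-type volume growth, summed with the extra $2^{-\varepsilon\iota}$ gained from the Sobolev norm surplus $s>d(1/p-1/2)$.

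The main step is a decomposition of $G^{(\iota)}(\sqrt L)$ into a part adapted to ``large'' balls $B_R^\varrho(a,b)$ with $R\ge|a|/4$ and a part adapted to ``small'' balls with $R<|a|/4$. For the large balls, \cref{prop:cc-distance}(2) gives $|B_R^\varrho(a,b)|\sim R^{d_1+2d_2}$ and part (3) gives the Euclidean-type inclusion $B_R^\varrho(a,b)\subseteq B_R^{|\cdot|}(a)\times B_{CR^2}^{|\cdot|}(b)$; here one uses the full (untruncated) restriction type estimate $\|F(\sqrt L)\chi_{B_R^\varrho}\|_{p\to2}\lesssim \|F\|_2$, which follows from \cref{eq:rest-2} with $\iota=0$, rescaled appropriately via \cref{prop:cc-distance}(4). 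For the small balls one invokes \cref{eq:rest-3}, which produces the crucial gain $|a|^{-d_2(1/p-1/2)}$; combined with the volume $|B_R^\varrho(a,b)|\sim R^{d_1+d_2}|a|^{d_2}$ this is precisely what is needed so that the effective dimension in the Calderón--Zygmund estimate is the \emph{topological} dimension $d=d_1+d_2$ rather than the homogeneous dimension $Q=d_1+2d_2$. In both regimes the rescaling $\delta_t$ from \cref{prop:cc-distance}(4) lets one pass between the unit-scale estimates of \cref{thm:rest} and the scale $2^\iota$ at which we are working.

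Concretely, I would split $G^{(\iota)}$ further along the spectrum of $T$ using the functions $G_\ell(\lambda,r)=G(\sqrt\lambda)\chi_\ell(\lambda/r)$ of \cref{thm:rest}: writing $G^{(\iota)}(\sqrt L)=\sum_{\ell} G^{(\iota)}_\ell(L,T)$ (in the sense of the remark after \cref{thm:rest}), the terms with $2^\ell$ large compared to $2^{-\iota}$-something are controlled directly by \cref{eq:rest-1}, which gives a geometric gain $2^{-\ell d_2(1/p-1/2)}$ in $\ell$, while the few terms with small $\ell$ are exactly the ones where, after applying $G(L,T)\chi_k(L/T)=G([k]T)\chi_k(L/T)$, the operator reduces to a Euclidean Bochner--Riesz/Mikhlin operator in the $y$-variable at scale $[k]$ and can be handled by classical means or by the large-ball estimate. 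Summing the $L^p\to L^2$ bounds against the volumes over a partition of $\R^d$ into $\varrho$-balls of radius $2^\iota$, using $\|G^{(\iota)}\|_2\lesssim 2^{-\iota s}\|G^{(\iota)}\|_{L^2_s}$ from the definition of $G^{(\iota)}$, and exploiting the surplus $s>d(1/p-1/2)$ yields the claimed factor $2^{-\varepsilon\iota}$ with $\varepsilon = s-d(1/p-1/2)>0$ (up to a harmless adjustment).

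The hard part will be the bookkeeping of the geometric decomposition near the plane $x=0$: one must partition $\R^{d_1}\times\R^{d_2}$ into regions on which either $R\ge|a|/4$ or $R<|a|/4$ holds uniformly, control the overlap of the resulting balls, and verify that the off-diagonal decay coming from finite propagation speed is compatible with this partition at every dyadic annulus $|a|\sim 2^m$. A secondary subtlety is matching the truncation parameter $\ell$ (the spectrum of $T$) to the geometric scale $|a|$ so that \cref{eq:rest-1} and \cref{eq:rest-3} can be deployed on the correct pieces without double-counting; this is where the identity $r\sim[k]^{-1}$ on $\supp G_\ell$ and the heuristic ``$k$ small $\Leftrightarrow$ Euclidean geometry at scale $k$'' must be made quantitative.
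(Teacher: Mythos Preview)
Your overall architecture matches the paper's proof closely: reduce via finite propagation speed to functions supported on $\varrho$-balls of radius $R=2^\iota$, split according to whether $|a|\gtrless 4R$, handle the elliptic region $R<|a|/4$ with Hölder plus \cref{eq:rest-3}, and in the non-elliptic region decompose along the $T$-spectrum. The large-$\ell$ part ($\ell>\iota$) via \cref{eq:rest-2} is also correct.

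There is, however, a genuine gap in your treatment of the small-$\ell$ terms ($\ell\le\iota$) in the non-elliptic region. Your assertion that after writing $G^{(\iota)}_\ell(L,T)=(G^{(\iota)}\psi)([k]\sqrt{-\Delta_y})\chi_\ell([k])$ the operator ``reduces to a Euclidean Bochner--Riesz/Mikhlin operator in $y$'' and ``can be handled by classical means'' is not a proof. The function $f$ still depends on $x$, the projections $P_k^\eta$ are present, and---most importantly---you need $G_\ell^{(\iota)}(L,T)f_{n,m}$ to be \emph{essentially supported} in a $y$-ball of radius $\sim 2^\ell R$ in order for Hölder against the smaller volume $R^{d_1}(2^\ell R)^{d_2}$ to yield the factor $2^{\iota d/q}$ rather than $2^{\iota Q/q}$. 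This ``almost finite propagation at Euclidean $y$-scale $2^\ell R$'' is not a consequence of the $\varrho$-finite propagation speed and cannot be read off from classical $y$-multiplier theory; it requires a separate estimate.

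The paper supplies exactly this missing ingredient: for each $\ell\le\iota$ it further decomposes the ball $B_n$ into pieces $B_{n,m}^{(\ell)}$ of $y$-radius $R_\ell=2^\ell R$, applies \cref{eq:rest-1} plus Hölder on each piece for the main term, and then controls the error (the part of $G_\ell^{(\iota)}(L,T)f_{n,m}^{(\ell)}$ landing outside a slightly dilated $y$-ball) using the weighted Plancherel estimate of \cref{lem:weight-center} with weight $|y-b|^N$, followed by $L^1$--$L^2$ interpolation. This lemma is the quantitative substitute for your heuristic ``Euclidean reduction,'' and your proposal neither invokes it nor suggests an alternative mechanism to achieve the $y$-localization. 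Without it the small-$\ell$ piece does not close.
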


Before we prove \cref{prop:dyadic-mult}, we briefly show how \cref{thm:Lp-multiplier} and \cref{thm:riesz-means} follow. The Bochner--Riesz summability of \cref{thm:riesz-means} (for $p>1$) might be seen as a consequence of \cref{thm:Lp-multiplier}, but it is however a direct consequence of \cref{prop:dyadic-mult}, without any Calderón-Zygmund theory involved.

\begin{proof}[Proof of \cref{thm:riesz-means}]
Let $G(\lambda):=(1-\lambda^2)_+^\delta$. As in \cref{prop:cc-distance} (4), define the dilations $\delta_t$ via $\delta_t (x,y) := (tx,t^2 y)$ for $t>0$ and $(x,y)\in\R^{d_1}\times \R^{d_2}$. Since $L$ is homogenous with respect to $\delta_t$, we have
\[
G(\sqrt L)(f \circ \delta_t)
 = (G(t\sqrt{L})f)\circ \delta_t.
\]
Hence
\[
\norm{(1-t^2L)_+^\delta}_{p\to p} = \norm{(1-L)_+^\delta}_{p\to p} \quad\text{for all } t>0.
\]
Thus we may assume $t=1$. Choose $s>0$ such that $d( 1/p - 1/2)<s<\delta+1/2$. Let $J_\alpha$ be the Bessel function of the first kind of order $\alpha>-1/2$, i.e.,
\[
J_\alpha(r) = \frac{(\lambda / 2)^\alpha}{\Gamma(\alpha+ 1/2) \pi^{1 / 2}} \int_{-1}^{1} e^{i r \lambda} (1-\lambda^2)^{\alpha-1 / 2} \,d\lambda,
\quad r>0.
\]
Since $|J_\alpha(r)| \lesssim r^{-1/2}$ (see Lemma 3.11 in Chapter IV of \cite{StWe71} for instance),
\[
|\hat G(\xi)| \sim |\xi|^{-\delta-1/2} |J_{\delta+1/2}(|\xi|)| \lesssim |\xi|^{-\delta-1} \quad \text{for } \xi\in\R\setminus\{0\}.
\]
Hence $|\xi^s \hat G(\xi)| \lesssim |\xi|^{s-\delta-1}$ and therefore $G\in L_s^2(\R)$ since $s-\delta-1 < -1/2$. We may decompose $G=G\psi + G(1-\psi)$ where $\psi:\R\to\C$ is a bump function supported in $[-3/4,3/4]$ with $\psi(\lambda)=1$ for $|\lambda|\le 1/2$. Then $G\psi$ is a bump function that may be treated for instance by the Mikhlin--Hörmander type result of \cite[Theorem 1]{MaMue14a}. Moreover, applying \cref{prop:dyadic-mult} for $G(1-\psi)$, we obtain
\[
\norm{(G(1-\psi))^{(\iota)}(\sqrt L)}_{p\to p} \lesssim 2^{-\varepsilon\iota} \norm{G}_{L_s^2(\R)} \quad \text{for }\iota\ge 0.
\]
Furthermore, $\sum_{\iota<0} (G(1-\psi))^{(\iota)}=(G(1-\psi))*(\sum_{\iota<0}\chi_\iota)^\vee$ is a Schwartz function that may again be treated by Theorem 1 of \cite{MaMue14a}. Taking the sum over all $\iota\ge 0$ finishes the proof.
\end{proof}
\begin{proof}[Proof of \cref{thm:Lp-multiplier}]
Since $\norm{F}_{\sloc,s}\sim \norm{\tilde F}_{\sloc,s}$ where $F(\lambda)=\tilde F(\sqrt \lambda)$, we may replace $F(L)$ by $F(\sqrt L)$ in the proof. Moreover, we may assume without loss of generality that $F$ is an even function since $L$ is a positive operator. To show $L^p$-boundedness of $F(\sqrt L)$, we verify the assumptions of \cref{prop:weak-multiplier}. Note that $s>1/2$ since $p\le p_{d_1,d_2}$. The required condition \cref{eq:stein-tomas-cond} is a consequence of \cref{eq:rest-2} and \cref{eq:rest-3}. Indeed, in our setting, since $|B_R(a,b)| \sim R^d \max\{R,|a|\}^{d_2}$ by \cref{prop:cc-distance}~(2), the first factor of the right hand site of \cref{eq:stein-tomas-cond} is given by
\begin{align*}
\Big(\frac{(R/t)^Q}{|B_R(a,b)|}\Big)^{1/p_0-1/2}
 & \sim t^{-Q(1/p_0-1/2)} \quad \text{if }R\ge |a|/4,
\end{align*}
and, since $R>t$,
\begin{align*}
\Big(\frac{(R/t)^Q}{|B_R(a,b)|}\Big)^{1/p_0-1/2}
 & \sim (|a|^{-d_2} t^{-d} (R/t)^{d_2})^{1/p_0-1/2}  \\
 & \ge (|a|^{d_2}t^d)^{-(1/p_0-1/2)}  \quad \text{if }R<|a|/4.
\end{align*}
Let $\delta_t$ be again the dilation from \cref{prop:cc-distance} (4). Then
\begin{equation}\label{eq:homo}
F(\sqrt L)(f \circ \delta_t)
 = (F(t\sqrt{L})f)\circ \delta_t.
\end{equation}
Let $t>0$ and $F$ be supported in $[1/2,2]$. Since $\varrho$ is homogeneous with respect to $\delta_t$ by \cref{prop:cc-distance} (4), \cref{eq:rest-3} yields for $R<|a|/4$
\begin{align}
\norm{ F(t\sqrt L) (\chi_{B_R^\varrho(a,b)}f)}_2
 & = t^{Q/2} \norm{ F(\sqrt L)(\chi_{B_{R/t}^\varrho(a/t,b/t^2)} (f\circ \delta_t))}_2 \notag \\
 & \lesssim t^{Q/2} (|a|/t)^{-d_2(1/ p_0 - 1/ 2)} \norm{ F}_2 \norm{f\circ \delta_t}_{p_0} \notag \\
 & = (t^d |a|^{d_2})^{-( 1/ p_0 - 1/ 2)} \norm{ F}_2 \norm{f}_{p_0}. \label{eq:weight-rest-4}
\end{align}
Given a bounded Borel function $F:\R\to\C$ supported in $[0,1]$, we decompose $F$ as
\[
F = \sum_{i\le 1} F\chi_i.
\]
Applying \cref{eq:weight-rest-4} for $\tilde t=t/2^i$ and $\tilde F=F(2^i\,\cdot\,) \chi$ and using $\norm{\tilde F}_2\lesssim \norm{F}_\infty$, we obtain
\begin{align*}
\norm{F(t\sqrt L) f}_2
& \le \sum_{i\le 1} \Vert (F\chi_i)(t\sqrt L)f \Vert_2 \\
& \lesssim \sum_{i\le 1} ((t/2^i)^d |a|^{d_2})^{-( 1/ p_0 - 1/ 2)} \norm{F}_\infty \norm{f}_{p_0} \\
& \sim (t^d |a|^{d_2})^{-( 1/ p_0 - 1/ 2)} \norm{F}_\infty \norm{f}_{p_0}.
\end{align*}
The computation for the case $R\ge|a|/4$ is similar. This establishes condition \cref{eq:stein-tomas-cond}.

Now we verify \cref{eq:cond-1}. For $i\in\Z$, let $F_i:=  F \chi_i$. Given $i,j\in \Z$, let $\iota:=i+j$ and
\[
G(\lambda):=F(2^i\lambda)\chi(\lambda),\quad\lambda\in\R,
\]
where $\chi$ is given by \cref{eq:dyadic}. Then $G$ is an even function, and
\begin{align*}
(F_i)^{(j)}(\lambda)
& = (\widehat{F_i} \chi_j)^\vee(\lambda)
 = (2^i\hat G(2^i\cdot) \chi_j)^\vee(\lambda) \notag \\
& = (\hat G \chi_\iota)^\vee(2^{-i}\lambda)
 = G^{(\iota)}(2^{-i}\lambda).
\end{align*}
Moreover, by the homogeneity \cref{eq:homo},
\[
\norm{G^{(\iota)}(2^{-i}\sqrt L)}_{p\to p} = \norm{G^{(\iota)}(\sqrt L)}_{p\to p}.
\]
Hence, for $\iota \ge 0$, \cref{prop:dyadic-mult} provides
\begin{align*}
\norm{(F\chi_i)^{(j)}(\sqrt L)}_{ p\to  p}
& 
 = \norm{G^{(\iota)}(\sqrt L)}_{ p\to  p} \notag \\
& \lesssim 2^{-\varepsilon\iota} \Vert G^{(\iota)}\Vert_{L_s^2}
  \lesssim 2^{-\varepsilon\iota} \norm{F}_{\sloc,s}.
\end{align*}
The case $\iota<0$ will be treated by the Mikhlin--Hörmander type result of \cite{MaMue14a}. Suppose $\iota<0$. Let $\psi:=\sum_{i\le 2} \chi_i$. Then $\psi$ is supported in $[-8,8]$. We decompose $G^{(\iota)}$ as $G^{(\iota)}=G^{(\iota)}\psi + G^{(\iota)}(1-\psi)$. Since $G^{(\iota)}=G*\check \chi_\iota$, $\supp G\subseteq [-2,2]$ and $\chi\in\S(\R)$, we have
\begin{align}
\Big|\Big(\frac{d}{d\lambda}\Big)^\alpha  G^{(\iota)} (\lambda )\Big|
 & = \Big|\Big(\frac{d}{d\lambda}\Big)^\alpha \int_{-2}^2 2^\iota G(\tau) \check \chi(2^\iota(\lambda-\tau))\, d\tau  \Big| \notag \\
 & \lesssim_N 2^{\iota(\alpha+1)} \int_{-2}^2 \frac{|G(\tau)|}{(1+2^\iota|\lambda-\tau|)^N}\, d\tau,
 \quad \alpha\in\N. \label{eq:error-conv}
\end{align}
Choosing $N:=0$ in \cref{eq:error-conv} and using $2^{\iota(\alpha+1)}\le 1$, we obtain
\begin{equation}\label{eq:sloc-small}
\norm{G^{(\iota)}\psi}_{\sloc,\lceil d/2 \rceil}\lesssim_{\psi} \norm{G}_2 \lesssim \norm{F}_{\sloc,s}.
\end{equation}
On the other hand, choosing $N:=\alpha+1$ in \cref{eq:error-conv} yields in particular
\[
\Big|\Big(\frac{d}{d\lambda}\Big)^\alpha  G^{(\iota)} (\lambda )\Big| \lesssim |\lambda|^{-\alpha} \norm{G}_2 \quad \text{for } |\lambda|\ge 4.
\]
Since all derivatives of $1-\psi$ are Schwartz functions, Leibniz rule yields
\begin{equation}\label{eq:sloc-wide-away}
\norm{G^{(\iota)}(1-\psi)}_{\sloc,\lceil d/2 \rceil}\lesssim_{\psi} \norm{G}_2 \lesssim \norm{F}_{\sloc,s}.
\end{equation}
Hence applying Theorem 1 of \cite{MaMue14a} provides
\begin{align*}
\norm{(F_i)^{(j)}(\sqrt L)}_{p\to p}
& = \norm{G^{(\iota)}(2^{-j}\sqrt L)}_{p\to p} \\
& = \norm{G^{(\iota)}(\sqrt L)}_{p\to p} \lesssim \norm{F}_{\sloc,s}.
\end{align*}
This establishes \cref{eq:cond-1}. Hence we may apply \cref{prop:weak-multiplier}.
\end{proof}

The rest of this section is devoted to the proof of \cref{prop:dyadic-mult}. The approach of our proof is essentially the same as in the proofs of Lemma~4.1 and Theorem~4.2 in \cite{ChOu16}. The new feature is the decomposition into eigenvalues of the rescaled Hermite operator $L^\eta$ via the truncation along the spectrum of $T$ afforded by the operators $\chi_\ell(L/T)$. This truncation corresponds to a subtler analysis of the sub-Riemannian geometry regarding the finite propagation speed property. A central ingredient of this analysis is the following weighted Plancherel estimate from \cite[Lemma~11]{MaMue14a}, which we can fortunately use out of the box.

\begin{lemma}\label{lem:weight-center}
Let $H:\R\to \C$ be a bounded Borel function supported in $[1/8,8]$, and, for $\ell\in\N$, let $H_\ell:\R\times \R \to \C$ be defined by
\[
H_\ell(\lambda,r) = H(\sqrt\lambda) \chi_\ell(\lambda/r) \quad\text{for }r\neq 0
\]
and $H_\ell(\lambda,r)=0$ else.
Then, for all $N\in\N$ and almost all $(a,b)\in \R^{d_1}\times\R^{d_2}$,
\begin{align*}
\int_{\R^d} \big| |y-b|^N & \, \mathcal K_{H_\ell(L,T)}((x,y),(a,b)) \big|^2 \,d(x,y) \le C_{\chi,N} 2^{\ell(2N - d_2)} \norm{H}_{L^2_N}^2,
\end{align*}
where $\mathcal K_{H_\ell(L,T)}$ denotes the integral kernel of the operator $H_\ell(L,T)$.
\end{lemma}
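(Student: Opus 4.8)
The plan is to reduce the weighted $L^2$-estimate to a one-dimensional computation by diagonalising $L^\eta$ in the Hermite basis and exploiting the homogeneity of the rescaled Hermite functions. First I would invoke \cref{lem:func-calc} to write, for fixed $(a,b)$,
\[
\mathcal K_{H_\ell(L,T)}((x,y),(a,b)) = (2\pi)^{-d_2}\int_{\R^{d_2}} g_{x,a}(\eta)\,e^{i(y-b)\eta}\,d\eta,
\qquad
g_{x,a}(\eta) := \sum_{[k]\sim 2^\ell} H\big(\sqrt{[k]|\eta|}\big)\chi_\ell([k])\,\mathcal K_k^\eta(x,a),
\]
so that $y\mapsto\mathcal K_{H_\ell(L,T)}((x,y),(a,b))$ is a translate of the inverse Fourier transform of $g_{x,a}$ on $\R^{d_2}$. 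By the Plancherel theorem, multiplication by $|y-b|^N$ corresponds to applying a derivative of order $N$ in $\eta$ to $g_{x,a}$ — an honest polynomial $\partial_\eta$-combination of total order $N$ when $N$ is even, and a fractional analogue otherwise — so that the left-hand side of the asserted inequality is comparable to $\int_{\R^{d_1}}\int_{\R^{d_2}}\big|(\text{order-}N\text{ derivative in }\eta)\,g_{x,a}(\eta)\big|^2\,d\eta\,dx$.

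Next, recalling that $\mathcal K_k^\eta(x,a)=|\eta|^{d_1/2}\mathcal K_k(|\eta|^{1/2}x,|\eta|^{1/2}a)$ with $\mathcal K_k(v,w)=\sum_{|\nu|_1=k}\Phi_\nu(v)\Phi_\nu(w)$, the symbol $g_{x,a}$ is \emph{radial} in $\eta$. Passing to polar coordinates $\rho=|\eta|$ and noting that $\rho\sim 2^{-\ell}$ on the support (since $[k]\rho\in\supp H^2\subseteq[\tfrac{1}{64},64]$ while $[k]\sim 2^\ell$), each $\eta$-derivative costs at most a factor $\rho^{-1}\sim 2^\ell$ from the chain rule, plus — when it lands on the Hermite-kernel factor — a further factor $\sim [k]\sim 2^\ell$ coming from the recursion $u\,h_m'(u)=\tfrac{1}{2}\big(\sqrt{m(m-1)}\,h_{m-2}(u)-h_m(u)-\sqrt{(m+1)(m+2)}\,h_{m+2}(u)\big)$, while only shifting the multiindex by $O(1)$; when it lands on $H(\sqrt{[k]\rho})$ it costs only $\sim 2^\ell$ and consumes one derivative of $H$. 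Thus, after a Leibniz expansion, every resulting term carries a total factor $\lesssim_N 2^{2\ell N}$ together with a Sobolev norm $\norm{H}_{L^2_j}\le\norm{H}_{L^2_N}$, $j\le N$.

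It then remains to carry out the $x$- and $\rho$-integrations. Substituting $v=\rho^{1/2}x$ and using the orthonormality of the $\Phi_\nu$ in $L^2(\R^{d_1})$, the $x$-integral collapses to $\sim\rho^{d_1/2}\sum_{[k]\sim 2^\ell}|c_k(\rho)|^2\,\mathcal K_k(\rho^{1/2}a,\rho^{1/2}a)$, where $c_k(\rho)$ is the (derivative of the) coefficient $H(\sqrt{[k]\rho})\chi_\ell([k])\rho^{d_1/2}$; here the essential input is the uniform bound $\mathcal K_k(w,w)\le C_{d_1}[k]^{d_1/2-1}$ of \cref{thm:hermite} (valid for $d_1\ge 2$), which removes the $a$-dependence. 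Finally, for each $k$ the substitution $u=\sqrt{[k]\rho}$ turns $\int_0^\infty|c_k(\rho)|^2\rho^{d_1/2}\rho^{d_2-1}\,d\rho$ into $\lesssim [k]^{-d_1/2-d_2}\int_0^\infty|H^{(j)}(u)|^2u^{d_1+2d_2-1}\,du\lesssim [k]^{-d_1/2-d_2}\norm{H}_{L^2_j}^2$ (the weight $u^{d_1+2d_2-1}$ being harmless on $\supp H$); combining with the factor $[k]^{d_1/2-1}$ from the Hermite bound and the $2^{2\ell N}$ from the derivatives, and summing over the $\sim 2^\ell$ values of $k$ with $[k]\sim 2^\ell$, one obtains $2^{2\ell N}\cdot\sum_k[k]^{-1-d_2}\cdot\norm{H}_{L^2_N}^2\sim 2^{2\ell N}\,2^{-\ell d_2}\,\norm{H}_{L^2_N}^2$, which is the claimed bound.

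The step I expect to demand the most care is the Leibniz bookkeeping of the second paragraph: one must check that \emph{every} distribution of the $N$ derivatives among $H(\sqrt{[k]|\eta|})$, the power $|\eta|^{d_1/2}$, and the two arguments of $\mathcal K_k(|\eta|^{1/2}x,|\eta|^{1/2}a)$ is dominated with the same total weight $2^{2\ell N}$, and that the Hermite recursions keep the multiindex shifts bounded so that orthogonality in $x$ is preserved after reindexing — together with the minor technical point of making sense of $|y-b|^N$ for odd $N$ through a fractional Plancherel identity (or by interpolating between the even cases).
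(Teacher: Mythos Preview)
The paper does not actually prove this lemma; it simply cites \cite[Lemma~11]{MaMue14a} and uses the result as a black box. Your sketch is essentially a reconstruction of how that result is proved, and the overall strategy is sound: Plancherel in $y$ converts the weight $|y-b|^N$ into $\eta$-derivatives of the radial symbol $g_{x,a}$, polar coordinates reduce matters to $\rho$-derivatives with harmless angular factors $\lesssim\rho^{-m}\sim 2^{\ell m}$, the Hermite recursions handle derivatives landing on the kernel at cost $O([k])$ each, and orthonormality in $x$ together with the diagonal bound $\mathcal K_k(w,w)\lesssim[k]^{d_1/2-1}$ closes the estimate with exactly the exponent $2^{\ell(2N-d_2)}$.

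Three remarks. First, your concern about odd $N$ is unnecessary: the left-hand side equals $\int|y-b|^{2N}|\mathcal K|^2\,d(x,y)$, and $|y-b|^{2N}=\sum_{|\alpha|=N}\tfrac{N!}{\alpha!}(y-b)^{2\alpha}$ is a genuine polynomial for every $N\in\N$, so ordinary derivatives $\partial_\eta^\alpha$ with $|\alpha|=N$ suffice and no fractional Plancherel identity is needed. Second, the pointwise bound from \cref{thm:hermite} that you invoke requires $d_1\ge 2$; this is implicit in the paper's standing hypothesis $p\le p_{d_1,d_2}$ but is not part of the lemma's statement, so strictly speaking your argument does not cover $d_1=1$. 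Third, and this is the point you correctly flag as delicate: after applying the recursions the differentiated kernel is no longer a combination of the projections $\mathcal K_{k'}$ but a sum of mixed tensors $\Phi_{\nu'}(v)\Phi_{\nu''}(w)$ with $\nu',\nu''$ within $O(N)$ of some $\nu$ with $|\nu|_1=k$. Consequently the orthogonality-in-$x$ step has bounded (not zero) overlap, both among the $\nu$'s at a fixed level $k$ and across neighbouring levels $k,k\pm2,\dots,k\pm 2N$. This only costs a constant $C_{N,d_1}$ and the argument goes through, but it is worth making explicit that the ``collapse'' you describe in the third paragraph is only approximate once $N\ge 1$.
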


\begin{proof}[Proof of \cref{prop:dyadic-mult}]
Let $\iota\in\N$ and $R:=2^\iota$. We proceed in several steps.

\smallskip

(1) \textit{Reduction to compactly supported functions.} Let $f\in D(\R^d)$. We will first show that we may restrict to functions supported in balls of radius $R$ with respect to the Carnot--Carathéodory distance $\varrho$. Recall that $\varrho$ induces the Euclidean topology on $\R^d$, which implies in particular that the metric space $(\R^d,\varrho)$ is separable.
Since the metric measure space $(\R^d,\varrho,|\cdot|)$ is a space of homogeneous type, we may thus choose a decomposition into disjoint sets $B_n \subseteq B_R^\varrho(a_n,b_n)$, $n\in\N$, such that for every $\lambda\ge 1$, the number of overlapping dilated balls $B_{\lambda R}^\varrho(a_n,b_n)$ may be bounded by a constant $C(\lambda)$, which is independent of $\iota$. We decompose $f$ as
\[
f = \sum_{n=0}^\infty f_n\quad \text{where } f_n:=f\chi_{B_n}.
\]
Since $G$ is even, so is $\hat G$. As $\chi_\iota$ is even as well, the Fourier inversion formula provides
\[
G^{(\iota)} ( \sqrt L)f_n = \frac{1}{2\pi} \int_{2^{\iota-1}\le |\tau|\le 2^{\iota+1}} \chi_\iota(\tau) \hat G(\tau) \cos(\tau \sqrt L)f_n \,d\tau.
\]
By \cref{prop:cc-distance} (5), $L$ satisfies the finite propagation speed property, whence $G^{(\iota)} ( \sqrt L)f_n$ is supported in $B_{3R}^\varrho(a_n,b_n)$ by the formula above. Since the balls $B_{3R}^\varrho(a_n,b_n)$ have only a bounded overlap, we obtain
\[
\norm{G^{(\iota)} ( \sqrt L)f_n}_p^p \lesssim \sum_{n=0}^\infty \Vert G^{(\iota)} ( \sqrt L)f_n\Vert_p^p.
\]
Thus, since the functions $f_n$ have disjoint support, it suffices to show
\begin{equation}\label{eq:prf-dyadic-g_n}
\Vert G^{(\iota)} ( \sqrt L)f_n \Vert_p \lesssim 2^{-\varepsilon\iota} \norm{G^{(\iota)}}_{L^2_s} \norm{f_n}_p,
\end{equation}
with a constant independent of $n\in\N$.

\smallskip

(2) \textit{Localizing the multiplier.} Next we show that only the part of the multiplier $G^{(\iota)}$ located at $|\lambda|\sim 1$ is relevant. Let $\psi:=\sum_{|i|\le 2} \chi_i$. Then $\psi$ is supported in $\{\lambda \in\R :  1/8\le |\lambda| \le 8 \}$, while $1-\psi$ is supported in $\{\lambda \in\R : |\lambda| \notin (1/4,4) \}$. We decompose $G^{(\iota)}$ as $G^{(\iota)}=G^{(\iota)}\psi + G^{(\iota)}(1-\psi)$. The second part of this decomposition can be treated by the Mikhlin--Hörmander type result of \cite{MaMue14a}. As in \cref{eq:error-conv}, we observe
\begin{equation}\label{eq:error-conv-2}
\Big|\Big(\frac{d}{d\lambda}\Big)^\alpha  G^{(\iota)} (\lambda )\Big|
  \lesssim_N 2^{\iota(\alpha+1)} \int_{-2}^2 \frac{|G(\tau)|}{(1+2^\iota|\lambda-\tau|)^N}\, d\tau,
 \quad \alpha\in\N.
\end{equation}
Recall that $G$ is supported in $[-2,-1/2]\cup [1/2,2]$. Thus, choosing $N:=\alpha+2$ in \cref{eq:error-conv-2}, we obtain
\[
\Big|\Big(\frac{d}{d\lambda}\Big)^\alpha  G^{(\iota)} (\lambda )\Big| \lesssim  2^{-\iota} \min\{|\lambda|^{-\alpha},1\} \norm{G}_2 \quad \text{whenever } |\lambda|\notin [1/4,4].
\]
Similar as in \cref{eq:sloc-small} and \cref{eq:sloc-wide-away}, we obtain
\[
\norm{G^{(\iota)}(1-\psi)}_{\sloc,\lceil d/2 \rceil}\lesssim_{\psi} 2^{-\iota} \norm{G}_2.
\]
Hence applying Theorem 1 of \cite{MaMue14a} provides
\[
\Vert (G^{(\iota)}(1-\psi))(\sqrt L)\Vert_{p\to p}
 \lesssim 2^{-\iota} \norm{G}_2.
\]
Thus, in place of \cref{eq:prf-dyadic-g_n}, it suffices to show
\begin{equation}
\Vert \chi_{B_{3R}^\varrho(a_n,b_n)} (G^{(\iota)}\psi)(\sqrt L) f_n \Vert_p \lesssim 2^{-\varepsilon\iota} \norm{G^{(\iota)}}_{L^2_s} \norm{f_n}_p.\label{eq:prf-dyadic-g_n-tilde}
\end{equation}
To that end, we distinguish the cases $|a_n|>4R$ and $|a_n|\le 4R$.

\smallskip

(3) \textit{The elliptic region.} Suppose $|a_n|>4R$. Then, by \cref{prop:cc-distance} (2),
\[
|B_{3R}^\varrho(a_n,b_n)|\sim R^d \max\{R,|a_n|\}^{d_2} = R^d |a_n|^{d_2}.
\]
Let $2\le q\le \infty$ such that $1/q=1/p-1/2$. Applying Hölder's inequality together with the restriction type estimate \cref{eq:rest-3} for the multiplier $G^{(\iota)}\psi|_{[0,\infty)}$ (recall that $L$ is a positive operator) yields
\begin{align*}
\Vert \chi_{B_{3R}^\varrho(a_n,b_n)} (G^{(\iota)}\psi)(\sqrt L) f_n\Vert_p
 & \lesssim (R^d |a_n|^{d_2})^{1/q} \Vert (G^{(\iota)}\psi)(\sqrt L) f_n\Vert_2 \\
 & \lesssim 2^{\iota d/q}\norm{G^{(\iota)}\psi}_2\norm{f_n}_p \\
 & \lesssim 2^{-\varepsilon\iota} \norm{G^{(\iota)}}_{L^2_s}\norm{f_n}_p
\end{align*}
if we choose $0<\varepsilon<s-d/q$. This shows \cref{eq:prf-dyadic-g_n-tilde} in the case $|a_n|>4R$.

\smallskip

(4) \textit{The non-elliptic region: Truncation along the spectrum of $T$.} Suppose $|a_n|\le 4R$. Let $G_\ell^{(\iota)} : \R\times \R \to \C$ be given by
\[
G_\ell^{(\iota)}(\lambda,r) = (G^{(\iota)}\psi) (\sqrt \lambda)\chi_\ell(\lambda/r) \quad \text{for }r\neq 0
\]
and $G_\ell^{(\iota)}(\lambda,r) = 0$ else. We decompose the function on the left hand side of \cref{eq:prf-dyadic-g_n-tilde} as
\begin{align*}
& \chi_{B_{3R}^\varrho(a_n,b_n)} (G^{(\iota)}\psi)(\sqrt L) f_n \\
& = \chi_{B_{3R}^\varrho(a_n,b_n)} \bigg(\sum_{\ell =0}^\iota + \sum_{\ell = \iota + 1}^\infty \bigg) G_\ell^{(\iota)}(L,T) f_n
  =: g_{n,\le \iota} + g_{n,> \iota}.
\end{align*}
The second summand $g_{n,> \iota}$ can be directly treated by \cref{thm:rest}. Indeed, \cref{prop:cc-distance} (2), Hölder's inequality and the restriction type estimate \cref{eq:rest-2} imply
\begin{align*}
\Vert g_{n,> \iota}\Vert_p
 & \lesssim R^{Q/q} \Vert g_{n,> \iota}\Vert_2 \\
 & \lesssim 2^{\iota(Q-d_2)/q} \norm{G^{(\iota)}\psi}_2 \norm{f_n}_p \\
 & \lesssim 2^{-\varepsilon\iota}  \norm{G^{(\iota)}}_{L^2_s} \norm{f_n}_p
\end{align*}
if we choose $0<\varepsilon<s-d/q$. Hence we are done once we have shown
\begin{equation}\label{eq:case-2.2}
\Vert g_{n,\le \iota} \Vert_p
 \lesssim  2^{-\varepsilon\iota} \norm{G^{(\iota)}}_{L^2_s} \norm{f_n}_p.
\end{equation}

\smallskip

(5) \textit{(Almost) finite propagation speed on Euclidean scales in the non-elliptic region.} The key idea is as follows: Since $T=(-\Delta_y)^{1/2}$, we have
\[
(G^{(\iota)}\psi) (\sqrt L)\chi_{\{2k+d_1\}}(L/T) 
 = H_{\iota} \big([k]\sqrt{-\Delta_y}\big)\chi_{\{2k+d_1\}}(L/T),
\]
where $H_{\iota}(\lambda):=(G^{(\iota)}\psi) \big(\sqrt{\lambda}\big)$. Thus one might expect that the operator $G_\ell^{(\iota)}(L,T)$ behaves roughly like $H_\iota(2^\ell \sqrt{-\Delta_y})$ regarding the finite propagation property. Since $|a_n|\le 4R$, \cref{prop:cc-distance} (3) yields
\[
B_R^\varrho(a_n,b_n)
 \subseteq B_{R}^{|\,\cdot\,|}(a_n) \times B_{CR^2}^{|\,\cdot\,|}(b_n).
\]
Hence, for every $0\le \ell\le \iota$, we find a decomposition of $B_n\subseteq B_R^\varrho(a_n,b_n)$ such that
\[
B_n = \bigcup_{m=1}^{M_{n,\ell}} B_{n,m}^{(\ell)},
\]
where $B_{n,m}^{(\ell)} \subseteq B_{R}^{|\,\cdot\,|}(a_n) \times B_{CR_\ell}^{|\,\cdot\,|}(b_{n,m}^{(\ell)})$ with $R_\ell:= 2^\ell R$ are disjoint subsets, and
\[
|b_{n,m}^{(\ell)} - b_{n,m'}^{(\ell)}| > R_\ell/2 \quad \text{for } m\neq m'.
\]
The number of subsets in this decomposition is bounded by
\begin{equation}\label{eq:M_nl}
M_{n,\ell}\lesssim (R^2/R_\ell)^{d_2} = 2^{(\iota-\ell)d_2}.
\end{equation}
Moreover, given $\gamma>0$, the number $N_\gamma$ of overlapping balls
\[
\tilde B_{n,m}^{(\ell)}:=B_{3R}^{|\,\cdot\,|}(a_n) \times B_{2^{\gamma\iota+1} C R_\ell}^{|\,\cdot\,|}(b_{n,m}^{(\ell)}),\quad 1\le m\le M_{n,\ell}
\] 
can be bounded by $N_\gamma \lesssim_\iota 1$, where
\[
A \lesssim_\iota B
\]
means $A\le 2^{C(p,d_1,d_2)\iota\gamma}B$ for some constant $C(p,d_1,d_2)>0$ depending only on the parameters $p,d_1,d_2$. (The parameter $\gamma>0$ is necessary for having rapid decay for the negligible part of the propagation, see \cref{eq:error-L1}. This trick has also been used in a similar fashion in \cite{Mue89}.) We decompose $f_n$ as
\[
f_n = \sum_{m=1}^{M_{n,\ell}} f_{n,m}^{(\ell)}
\quad \text{where } f_{n,m}^{(\ell)}:=f_n \chi_{B_{n,m}^{(\ell)}}.
\]
In the next step, we show that the function
\[
g_{n,m}^{(\ell)}:= \chi_{B_{3R}^\varrho(a_n,b_n)} G_\ell^{(\iota)}(L,T) f_{n,m}^{(\ell)}
\]
is essentially supported in the ball $\tilde B_{n,m}^{(\ell)}$. Let $\chi_{n,m}^{(\ell)}$ denote the indicator function of $\tilde B_{n,m}^{(\ell)}$. We decompose $g_{n,\le \iota}$ as
\[
g_{n,\le \iota} = \sum_{\ell=0}^\iota \sum_{m=1}^{M_{n,\ell}} \tilde g_{n,m}^{(\ell)} + \sum_{\ell=0}^\iota \sum_{m=1}^{M_{n,\ell}} (1-\chi_{n,m}^{(\ell)}) g_{n,m}^{(\ell)},
\]
where $\tilde g_{n,m}^{(\ell)}:=\chi_{n,m}^{(\ell)} g_{n,m}^{(\ell)}$. The first summand represents the essential parts of the propagation, while the second one should be seen as an error term.

For the first summand, we observe that Hölder's inequality and the bounded overlapping property of the balls $\tilde B_{n,m}^{(\ell)}$ imply
\begin{equation}\label{eq:small-ell-main-1}
\bigg\Vert \sum_{\ell=0}^\iota \sum_{m=1}^{M_{n,\ell}}
\tilde g_{n,m}^{(\ell)}\bigg\Vert_p^p
 \le ((\iota+1) N_\gamma)^{p-1} \sum_{\ell=0}^\iota \sum_{m=1}^{M_{n,\ell}} \Vert\tilde g_{n,m}^{(\ell)}\Vert_p^p.
\end{equation}
Using Hölder's inequality together with \cref{eq:rest-1} yields
\begin{align}
\Vert\tilde g_{n,m}^{(\ell)}\Vert_p & \lesssim_\iota (R^{d_1} R_\ell^{d_2})^{1/q} \norm{g_{n,m}^{(\ell)}}_2 \notag \\
& = 2^{\iota d/q+\ell d_2/q}  \norm{g_{n,m}^{(\ell)}}_2 \notag \\
& \lesssim 2^{\iota d/q} \norm{G^{(\iota)}}_2 \norm{f_{n,m}^{(\ell)}}_p \notag \\
  & \sim  2^{\iota (d/q -s)}  \norm{G^{(\iota)}}_{L^2_s} \norm{f_{n,m}^{(\ell)}}_p. \label{eq:small-ell-main-4}
\end{align}
By \cref{eq:small-ell-main-1} and \cref{eq:small-ell-main-4}, we obtain
\[
\bigg\Vert \sum_{\ell=0}^\iota \sum_{m=1}^{M_{n,\ell}}
\tilde g_{n,m}^{(\ell)}\bigg\Vert_p^p
 \lesssim 2^{- \varepsilon \iota} \norm{G^{(\iota)}}_{L^2_s}^p \norm{f_{n}}_p^p
\]
for some $\varepsilon>0$ provided we choose $\gamma>0$ small enough before. As an upshot, to verify \cref{eq:case-2.2} it remains to show
\begin{equation}\label{eq:error-Lp}
\bigg\Vert \sum_{\ell=0}^\iota \sum_{m=1}^{M_{n,\ell}} (1-
\chi_{n,m}^{(\ell)}) g_{n,m}^{(\ell)}\bigg\Vert_p \lesssim 2^{-\varepsilon\iota}\norm{G^{(\iota)}}_{L^2_s} \norm{f_n}_p.
\end{equation}

(6) \textit{The negligible part of the propagation.} For showing \cref{eq:error-Lp}, we interpolate between $L^1$ and $L^2$ via the Riesz--Thorin interpolation theorem. The $L^2$-estimate is allowed to be quite rough, since the rapid decay in terms of $2^\iota$ derives from the $L^1$-estimate. For the $L^2$-estimate, we employ the Sobolev embedding
\[
\norm{G^{(\iota)}}_\infty \lesssim \norm{G^{(\iota)}}_{L_{1/2+\delta}^2} 
\sim 2^{\iota(1/2+\delta)} \norm{G^{(\iota)}}_{L^2},\quad\delta>0,
\]
which in conjunction with Hölder's inequality and \cref{eq:M_nl} provides
\begin{align}
\bigg\Vert \sum_{\ell=0}^\iota \sum_{m=1}^{M_{n,\ell}} (1-
\chi_{n,m}^{(\ell)}) g_{n,m}^{(\ell)}\bigg\Vert_2
 & \le \norm{G^{(\iota)}\psi}_\infty \sum_{\ell=0}^\iota \sum_{m=1}^{M_{n,\ell}} \Vert f_{n,m}^{(\ell)}\Vert_2 \notag \\
 & \le \norm{G^{(\iota)}\psi}_\infty  ((\iota+1)M_{n,\ell})^{1/2} \norm{f_{n}}_2 \notag\\
 & \lesssim 2^{\iota+d_2/2} \norm{G^{(\iota)}}_2 \norm{f_{n}}_2. \label{eq:error-L2}
\end{align}
The $L^1$-estimate is derived from an $L^\infty$ integral kernel estimate. Let $\mathcal K_\ell^{(\iota)}$ denote the integral kernel of $G_\ell^{(\iota)}(L,T)$. Then
\[
G_\ell^{(\iota)}(L,T) f_{n,m}^{(\ell)}(x,y) = \int_{\R^d} \mathcal K_\ell^{(\iota)}((x,y),(a,b)) f_{n,m}^{(\ell)}(a,b) \,d(a,b).
\]
For $b\in\R^{d_2}$, define the set
\[
B_{n}^{(b)} := \{ (x,y)\in B_{3R}^\varrho(a_n,b_n) : |y-b|\ge 2^{\gamma\iota}C R_\ell \}.
\]
Note that $(x,y)\in \supp((1- \chi_{n,m}^{(\ell)})\chi_{B_{3R}^\varrho(a_n,b_n)})$ and $(a,b)\in\supp f_{n,m}^{(\ell)}$ imply
\[
|y-b_{n,m}^{(\ell)}|\ge  2^{\gamma\iota+1}CR_\ell
\quad\text{and} \quad |b-b_{n,m}^{(\ell)}|< CR_\ell,
\]
and thus in particular $(x,y)\in B_{n}^{(b)}$. Hence
\begin{align}
& \bigg\Vert \sum_{\ell=0}^\iota \sum_{m=1}^{M_{n,\ell}} (1-
\chi_{n,m}^{(\ell)}) g_{n,m}^{(\ell)}\bigg\Vert_1 \notag \\
& \le \int_{\R^d} \sum_{\ell=0}^\iota \sum_{m=1}^{M_{n,\ell}} (1- \chi_{n,m}^{(\ell)}(x,y)) \chi_{B_{3R}^\varrho(a_n,b_n)}(x,y)  \notag \\
& \qquad \times \int_{\R^d} |\mathcal K_\ell^{(\iota)}((x,y),(a,b)) f_{n,m}^{(\ell)}(a,b)| \,d(a,b)\,d(x,y)  \notag \\
& \le \int_{B_R^\varrho(a_n,b_n)} \int_{B_{n}^{(b)}} \sum_{\ell=0}^\iota \sum_{m=1}^{M_{n,\ell}} |\mathcal K_\ell^{(\iota)}((x,y),(a,b)) f_{n,m}^{(\ell)}(a,b)| \,d(x,y)\,d(a,b) \notag  \\
& = \int_{B_R^\varrho(a_n,b_n)} \kappa_\gamma(a,b) |f_n(a,b)| \,d(a,b), \label{eq:error-L1-2}
\end{align}
where
\[
\kappa_\gamma(a,b) := \sum_{\ell=0}^\iota  \int_{B_{n}^{(b)}} |\mathcal K_\ell^{(\iota)}((x,y),(a,b)) | \,d(x,y).
\]
Given $N\in\N$, the Cauchy--Schwarz inequality yields
\begin{align*}
\kappa_\gamma(a,b)
& \lesssim \sum_{\ell=0}^\iota (2^{\gamma\iota}R_\ell)^{-N} \int_{B_{3R}^\varrho(a_n,b_n)} \big||y-b|^N \mathcal  K_\ell^{(\iota)}((x,y),(a,b)) \big| \,d(x,y) \\
& \le  \sum_{\ell=0}^\iota (2^{\gamma\iota}R_\ell)^{-N} |B_{3R}^\varrho(a_n,b_n)|^{1/2} \\
& \qquad \times \bigg( \int_{\R^d}  \big||y-b|^N \mathcal  K_\ell^{(\iota)}((x,y),(a,b)) \big|^2 \,d(x,y) \bigg)^{1/2}.
\end{align*}
Recall that $R_\ell = 2^{\iota+\ell}$ and $R=2^\iota$, and $|a_n|\le 4R$. By \cref{prop:cc-distance} (2), we have
\[
|B_{3R}^\varrho(a_n,b_n)| \sim R^Q = 2^{\iota Q}.
\]
Now, applying \cref{lem:weight-center} for $H=G^{(\iota)}\psi|_{[0,\infty)}$, and using the fact
\[
2^{-\iota N}\norm{G^{(\iota)}\psi|_{[0,\infty)}}_{L^2_N}\lesssim_{\psi} \norm{G^{(\iota)}}_2,
\]
we get
\begin{align*}
 \kappa_\gamma(a,b)
  & \lesssim  \sum_{\ell=0}^\iota 2^{-(\gamma\iota+\iota+\ell)N}
  2^{\iota Q/2}  2^{\ell(N - d_2/2)} \norm{G^{(\iota)}\psi|_{[0,\infty)}}_{L^2_N} \\
  & \lesssim 2^{-\gamma \iota  N} 2^{\iota Q/2} \norm{G^{(\iota)}}_2.
\end{align*}
Hence, plugging this estimate into \cref{eq:error-L1-2}, we obtain
\begin{equation}\label{eq:error-L1}
\bigg\Vert \sum_{\ell=0}^\iota \sum_{m=1}^{M_{n,\ell}} (1-
\chi_{n,m}^{(\ell)}) g_{n,m}^{(\ell)}\bigg\Vert_1
 \lesssim 2^{-\gamma\iota N}  2^{\iota Q/2}  \norm{G^{(\iota)}}_2 \norm{f_n}_1.
\end{equation}
Via \cref{eq:error-L2} and \cref{eq:error-L1}, the Riesz--Thorin interpolation theorem provides
\[
\bigg\Vert \sum_{\ell=0}^\iota \sum_{m=1}^{M_{n,\ell}} (1-
\chi_{n,m}^{(\ell)}) g_{n,m}^{(\ell)}\bigg\Vert_p
\lesssim (2^{-\gamma\iota N}  2^{\iota Q/2})^{1-\theta} (2^{\iota+d_2/2})^\theta \norm{G^{(\iota)}}_2 \norm{f_n}_p,
\]
where $\theta:=2(1-1/p)<1$. Choosing $N\in\N$ large enough yields \cref{eq:error-Lp}, whence we are done with the proof.
\end{proof}

\end{document}